\newcommand{\thedocumentname}{Classification of boundary Lefschetz Fibrations}
\newcommand{\theauthor}{Stefan Behrens, Gil R.\ Cavalcanti and Ralph L.\ Klaasse}
\numberwithin{equation}{section}							
\theoremstyle{plain}
\newtheorem{theorem}{Theorem}[section]
\newtheorem*{theorem*}{Theorem}
\newaliascnt{lemma}{theorem}
\newtheorem{lemma}[lemma]{Lemma}
\newaliascnt{proposition}{theorem}
\newtheorem{proposition}[proposition]{Proposition}
\newaliascnt{corollary}{theorem}
\newtheorem{corollary}[corollary]{Corollary}
\theoremstyle{definition}
\newaliascnt{definition}{theorem}
\newtheorem{definition}[definition]{Definition}
\newaliascnt{remark}{theorem}
\newtheorem{remark}[remark]{Remark}
\newaliascnt{exa}{theorem}
\newtheorem{exa}[exa]{Example}
\newcommand{\Z}{\mathbb{Z}}
\newcommand{\R}{\mathbb{R}}
\newcommand{\C}{\mathbb{C}}
\newcommand{\bi}{\begin{itemize}}
	\newcommand{\ei}{\end{itemize}}
\newcommand{\be}{\begin{equation*}}
	\newcommand{\ee}{\end{equation*}}
\newcommand{\bp}[1][]{\begin{proof}[Proof#1.]}				
	\newcommand{\ep}{\end{proof}}
\newcommand{\wt}{\widetilde}
\newcommand{\mc}{\mathcal}
\newcommand{\gcs}{generalized complex structure\xspace}
\newcommand{\lf}{Lefschetz fibration\xspace}
\newcommand{\blf}{boundary \lf}
\newcommand{\bff}{boundary fibration\xspace}
\newcommand{\lfs}{Lefschetz fibrations\xspace}
\newcommand{\blfs}{boundary \lfs}
\newcommand{\del}{\partial}
\newcommand{\ra}{\rightarrow}
\newcommand{\inv}{^{-1}}
\newcommand{\MCG}{\mathcal{M}}
\newcommand{\scp}[1]{\langle{#1}\rangle}
\newcommand{\CPbar}{\overline{\C P^2}}
\newcommand{\CP}{{\C P}^2}
\begin{document}

\author{Stefan Behrens \ \ \ \ \ \ Gil R.\ Cavalcanti \ \ \ \ \ \ Ralph L.\ Klaasse}
\address{Stefan Behrens. \rm{Department of Mathematics, Utrecht University, 3508 TA Utrecht, The Netherlands}}
\email{s.behrens@uu.nl}

\address{Gil R.\ Cavalcanti. \rm{Department of Mathematics, Utrecht University, 3508 TA Utrecht, The Netherlands}}
\email{g.r.cavalcanti@uu.nl}

\address{Ralph L.\ Klaasse. \rm{Department of Mathematics, Utrecht University, 3508 TA Utrecht, The Netherlands}}
\email{r.l.klaasse@uu.nl}

\thanks{S.B.\ was supported by VICI grant number 639.033.312, and G.C.\ and R.K.\ were supported by VIDI grant number 639.032.221 from NWO, the Netherlands Organisation for Scientific Research.}

\dedicatory{Dedicated to Professor Nigel Hitchin on the occasion
     of his $70$th birthday}

\begin{abstract}
We show that a four-manifold admits a boundary Lefschetz fibration over the disc if and only if it is diffeomorphic to $S^1 \times S^3\# n \overline{\C P^2}$, $\# m\C P^2 \#n\overline{\C P^2}$ or $\# m (S^2 \times S^2)$. Given the relation between \blfs and stable generalized complex structures, we conclude that the manifolds $S^1 \times S^3\# n \overline{\C P^2}$, $\#(2m+1)\C P^2 \#n\overline{\C P^2}$ and $\# (2m+1) S^2 \times S^2$ admit stable structures whose type change locus has a single component and are the only four-manifolds whose stable structure arise from \blfs over the disc.
\end{abstract}

\title{Classification of boundary Lefschetz fibrations over the disc}
\maketitle

\vspace{-2em}
\tableofcontents

%
\section{Introduction}

Generalized complex structures, introduced by Hitchin \cite{MR2013140} and Gualtieri \cite{MR2811595} in~2003, are geometric structures which generalize simultaneously complex and symplectic structures while at the same time providing the mathematical background for string theory. One feature of generalized complex geometry is that the structure is not homogenous. In fact, a single connected generalized complex manifold may have complex and symplectic points. This lack of homogeneity is governed by the {\it type} of the structure, an integer-valued upper semicontinuous function on the given manifold which tells ``how many complex directions'' the structure has at the given point. In particular, on a $2n$-dimensional manifold, points of type $0$ are symplectic points, while points of type $n$ are complex.

Among all type-changing generalized complex structures, one kind seems to deserve special attention: stable generalized complex structures. These are the structures whose canonical section of the anticanonical bundle vanishes transversally along a codimension-two submanifold, $\mathcal{D}$, endowing it with the structure of an elliptic divisor in the language of \cite{Cavalcanti:2015uua}. Consequently, the type of such a structure is $0$ on $X \setminus \mc{D}$, while on $\mc{D}$ it is equal to two. Many examples of stable generalized complex structures were produced in dimension four \cite{MR2574746,MR3531974,MR2958956,MR3177992} and a careful study was carried out in \cite{Cavalcanti:2015uua}. One of the outcomes of that study was that it related stable generalized complex structures to symplectic structures on a certain Lie algebroid. 

\begin{theorem*}[{\cite[Theorem 3.7]{Cavalcanti:2015uua}}]
Let $\mathcal{D}$ be a co-orientable elliptic divisor on $X$. Then there is a correspondence between gauge equivalence classes of stable generalized complex structures on $X$ which induce the divisor $\mathcal{D}$, and zero-residue symplectic structures on $(X,\mathcal{D})$.
\end{theorem*}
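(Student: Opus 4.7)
The plan is to construct maps in both directions between the two sets and verify that they are mutually inverse and compatible with the respective equivalence relations. Associated to the co-orientable elliptic divisor $\mathcal{D}\subset X$ there is an \emph{elliptic Lie algebroid} $\mathcal{A}_{\mathcal{D}}$ whose sections, in polar coordinates $(r,\theta)$ normal to $\mathcal{D}=\{r=0\}$, are generated by vector fields tangent to $\mathcal{D}$ together with $r\partial_r$ and $\partial_\theta$. Its dual carries a residue map to $\Omega^*(\mathcal{D})$ extracting the $d\theta$-coefficient, and a zero-residue symplectic structure on $(X,\mathcal{D})$ is a closed nondegenerate section of $\wedge^2 \mathcal{A}_{\mathcal{D}}^*$ whose cohomology class has vanishing radial residue.

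Given a \sgcs $\mathcal{J}$ inducing $\mathcal{D}$, I would produce its elliptic symplectic form as follows. On $X\setminus\mathcal{D}$ the structure is of symplectic type, so after a real $B$-field gauge transformation it is represented by a pure spinor line $e^{B+i\omega}$ with $\omega$ symplectic. Transversality of the anticanonical section along $\mathcal{D}$ pins down the asymptotics: in the local model one finds a leading singular term of the shape $f(x)(dr/r)\wedge d\theta$ plus smooth terms, and this is precisely the behaviour that allows $B+i\omega$ to be realised as a smooth section of $\wedge^2\mathcal{A}_{\mathcal{D}}^*\otimes\mathbb{C}$. Nondegeneracy of $\mathcal{J}$ then makes the imaginary part into an elliptic symplectic form, while single-valuedness of $\mathcal{J}$ across $\mathcal{D}$ forces the radial residue to vanish. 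In the reverse direction, starting from a zero-residue $\omega_\mathcal{A}$, vanishing of the radial residue allows me to pair $\omega_\mathcal{A}$ with a real closed $B$-field so that $e^{B+i\omega_\mathcal{A}}$ extends smoothly across $\mathcal{D}$ as a pure spinor defining a \sgcs, which is checked directly in the local model.

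For the equivalences, two real $B$-field gauge representatives of the same stable structure produce the same elliptic symplectic form, since the real $B$-field shift is absorbed into the single elliptic form on $\mathcal{A}_{\mathcal{D}}$, so the forward map descends to gauge-equivalence classes; in the converse direction a Moser argument internal to the elliptic de Rham complex guarantees injectivity. The main obstacle I anticipate is the local-to-global analytic step: showing rigorously that transversal vanishing of the anticanonical section corresponds exactly to $B+i\omega$ extending as a smooth elliptic complex $2$-form, and that the zero-residue condition is precisely the obstruction to extending the pure spinor smoothly back across $\mathcal{D}$. Both rest on a careful identification of smooth sections of $\wedge^2\mathcal{A}_{\mathcal{D}}^*\otimes\mathbb{C}$ with complex $2$-forms on $X\setminus\mathcal{D}$ having the prescribed logarithmic--angular singularities along $\mathcal{D}$, which is where I expect the bulk of the work to lie.
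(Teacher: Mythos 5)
This statement is quoted verbatim from \cite[Theorem 3.7]{Cavalcanti:2015uua}; the present paper offers no proof of it (it is stated by reference in the introduction as background), so there is no internal argument to compare your proposal against. Judged against the strategy of the cited source, your outline is essentially the standard one: represent the structure on $X\setminus\mathcal{D}$ by a pure spinor $e^{B+i\omega}$, use transversality of the anticanonical section to show that $B+i\omega$ extends as a smooth nondegenerate closed section of $\wedge^2\mathcal{A}_{\mathcal{D}}^*\otimes\mathbb{C}$, take the imaginary part to get the elliptic symplectic form, and absorb the real part into the gauge equivalence. You have also correctly located where the real work lies, namely the identification of smooth sections of $\wedge^2\mathcal{A}_{\mathcal{D}}^*$ with forms on $X\setminus\mathcal{D}$ having prescribed logarithmic--angular singularities.

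Two points need correction. First, the residue whose vanishing characterises the image of the correspondence is the \emph{elliptic} residue, i.e.\ the coefficient of $d\log r\wedge d\theta$ restricted to $\mathcal{D}$ (which requires the co-orientation to be well defined), not the ``$d\theta$-coefficient'' or the radial residue as you describe it; the radial and $\theta$-residues are secondary invariants that only become defined once the elliptic residue vanishes, and conflating them obscures why single-valuedness of the spinor across $\mathcal{D}$ is equivalent to the stated condition. Second, no Moser argument is needed (or appropriate) for injectivity: the elliptic symplectic form is recovered canonically from the stable structure as the imaginary part of the extended $B+i\omega$, and a gauge transformation by a smooth real closed $2$-form changes only the real part, so the map from gauge classes to zero-elliptic-residue forms is injective by construction. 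A Moser argument would instead prove that isotopic forms yield isomorphic structures, which is a different (and here unclaimed) statement.
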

 
This results paves the way for the use of symplectic techniques to study stable structures. One result that exemplifies that use is the following.

\begin{theorem*}[{\cite[Theorem 7.1]{2017arXiv170303798C}}]\label{theo:blf to gcs}
Let $X^4$ be a closed connected and orientable four-manifold and let $\Sigma$ be a compact connected and orientable two-manifold with boundary $Z = \del \Sigma$. Let $f\colon X^4 \to \Sigma^2$ be a boundary Lefschetz fibration for which $\mathcal{D} = f^{-1}(\del \Sigma)$ is a co-orientable submanifold of $X$, and with $0 \neq [f^{-1}(p)] \in H_2(X \setminus \mathcal{D};\R) $, where $p \in \Sigma$ is a regular value of $f$. Then $X$ admits a stable \gcs\ whose degeneracy locus is $\mathcal{D}$.
\end{theorem*}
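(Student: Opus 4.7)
The plan is to apply the correspondence theorem from \cite{Cavalcanti:2015uua} quoted above, which reduces the problem to constructing a zero-residue symplectic structure on the Lie algebroid associated to the elliptic divisor $(X,\mathcal{D})$. The co-orientability assumption on $\mathcal{D}$ ensures that $\mathcal{D}$ has the structure of a co-oriented elliptic divisor (for example using the boundary fibration structure on a collar to produce an elliptic ideal), so the target category for the correspondence is available. Thus the whole proof becomes a Gompf--Thurston style construction, performed in the category of forms on the relevant Lie algebroid rather than on $X$ itself.

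First I would treat the interior: over $\Sigma \setminus \partial \Sigma$ the map $f$ is an honest \lf. The hypothesis $0 \neq [f^{-1}(p)] \in H_2(X \setminus \mathcal{D};\R)$ produces, by Poincar\'e--Lefschetz duality on $X\setminus\mathcal{D}$, a closed 2-form $\eta$ on $X\setminus\mathcal{D}$ whose integral on a regular fibre is positive. Perturbing $\eta$ near the Lefschetz critical points by the standard local K\"ahler model $\tfrac{i}{2}(dz_1\wedge d\ol{z}_1+dz_2\wedge d\ol{z}_2)$ (and using a partition of unity as in Gompf's construction) yields a closed 2-form on $X\setminus \mathcal{D}$ that is positive on every fibre and on every vanishing cycle tangent plane. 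Adding a large multiple of $f^*\omega_\Sigma$, with $\omega_\Sigma$ an area form on $\Sigma$, gives a genuine symplectic form on a neighbourhood of $f^{-1}(\Sigma \setminus \nu(\partial\Sigma))$ for some collar $\nu(\partial\Sigma)$.

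Second I would produce the form on the collar. Here the definition of a \blf gives a local normal form of $f$: on a tubular neighbourhood of $\mathcal{D}$ the map looks like the circle fibration $\mathcal{D}\to \partial\Sigma$ extended by the radial direction, so one has natural coordinates $(r,\theta)$ transverse to $\mathcal{D}$ in which $f$ is the projection $r\mapsto r$ onto the collar of $\partial\Sigma$. In these coordinates one builds a model zero-residue symplectic form along the lines of the elliptic local model from \cite{Cavalcanti:2015uua}: the base contributes the elliptic form $d\log r \wedge d\theta$, which has zero residue by construction, and the fibre contributes an area form pulled back along the circle fibration $\mathcal{D}\to \partial\Sigma$. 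One then checks that this local model is a nondegenerate section of $\wedge^2 \mathcal{A}^*$ for the elliptic Lie algebroid $\mathcal{A}$ associated to $\mathcal{D}$, and that its residue pairing with the fibre of $\mathcal{D}\to\partial\Sigma$ vanishes because the form is a pullback.

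Finally I would glue the two forms. On the overlap both forms are fibre-nondegenerate forms pulled back from the same base coordinate, so a convex combination in the Thurston sense (cut off in the base direction, then corrected by a large multiple of $f^*\omega_\Sigma$) produces a closed 2-form on $X$ that is elliptic-symplectic everywhere and zero-residue along $\mathcal{D}$. Applying the correspondence theorem then yields the stable \gcs with degeneracy locus $\mathcal{D}$. The main obstacle I anticipate is the collar step: one must verify that the $H_2$-nontriviality hypothesis in the interior is compatible with the cohomology class forced on the form by the zero-residue condition near $\mathcal{D}$, i.e.\ that the interior cohomology class chosen via Poincar\'e--Lefschetz duality can be realised by a form whose behaviour at $\mathcal{D}$ matches the elliptic local model up to an exact term. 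This is essentially a relative de~Rham computation on the Lie algebroid, and is where the zero-residue condition really earns its keep.
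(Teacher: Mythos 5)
This statement is not proved in the present paper at all: it is quoted verbatim from \cite[Theorem 7.1]{2017arXiv170303798C} and used as external input, so there is no in-paper argument to compare against. Your overall strategy --- a Gompf--Thurston construction carried out in the category of Lie algebroid forms, followed by the correspondence theorem of \cite{Cavalcanti:2015uua} --- is indeed the strategy of the cited reference, and the interior step (a closed $2$-form positive on fibres, supplied by the hypothesis $[f^{-1}(p)]\neq 0$ in $H_2(X\setminus\mathcal{D};\R)$, corrected at Lefschetz points and stabilised by a large multiple of $f^*\omega_\Sigma$) is fine.

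There is, however, a genuine error in your collar model, and it sits exactly where the theorem is delicate. In the normal form \eqref{eq:boundary singularity}, the angular coordinate $\theta$ of the normal bundle of $\mathcal{D}$ is a \emph{fibre} direction of $f$, not a base direction: the torus fibre near $\mathcal{D}$ is spanned by $\partial_\theta$ and by the circle direction of $\mathcal{D}\to Z$, while the base directions upstairs are $r$ (with $f$ essentially $r\mapsto r^2$) and the coordinate along $Z$. Consequently $d\log r\wedge d\theta$ is not pulled back from the base, and, more importantly, its coefficient along $\mathcal{D}$ \emph{is} the elliptic residue; a local model with leading term $d\log r\wedge d\theta$ has residue $1$, not $0$, and lies outside the class of forms to which the correspondence theorem applies. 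The correct collar model is the pullback of a log-symplectic form on $(\Sigma,Z)$, namely $d\log r\wedge f^*(dy_2)$ with $y_2$ a coordinate along $Z$ (this pairs $d\log r$ with a direction \emph{tangent} to $\mathcal{D}$), plus a fibre area form of the shape $d\theta\wedge dx_4$ with $x_4$ the fibre coordinate of $\mathcal{D}\to Z$; this has zero elliptic residue precisely because the $d\log r\wedge d\theta$ component is absent. With that correction the rest of your gluing argument goes through, and your closing worry about matching cohomology classes is resolved by the usual Thurston trick on a tubular neighbourhood of $\mathcal{D}$.
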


This result is reminiscent of Gompf's original one \cite{GompfStipsicz}, showing that Lefschetz fibrations give rise to symplectic structures. It is also is similar in content to a number of other results relating structures which are close to being symplectic to maps which are close to being Lefschetz fibrations. These include the relations between near-symplectic structures and broken Lefschetz fibrations \cite{MR2140998}, and between folded symplectic structures and real log-symplectic structures and achiral Lefschetz fibrations \cite{MR2253445,2016arXiv160600156C,TOPO:TOPO12000}.

The upshot of these results is that they at the same time furnish (at least theoretically) a large number of examples of manifolds admitting the desired geometric structure, and provide us with a better grip on those structures. With this in mind, our aim here is to classify all four-manifolds which admit boundary Lefschetz fibrations over the disc. Our main result is the following (\autoref{theo:blf over D2}).

\begin{theorem*}
Let $f\colon X^4\to D^2$ be a relatively minimal boundary Lefschetz fibration and $\mathcal{D} = f^{-1}(\del D^2)$. Then $X$ is diffeomorphic to one of the following manifolds:
\begin{enumerate}
\item $S^1 \times S^3$;
\item $\#m (S^2 \times S^2)$, including $S^4$ for $m=0$;
\item $\#m \C P^2 \# n \overline{\C P^2}$ with $m> n \geq 0$.
\end{enumerate}
In all cases the generic fibre is nontrivial in $H_2(X\setminus \mathcal{D};\R)$. In case \emph{(1)}, $\mathcal{D}$ is co-orientable, while in cases \emph{(2)} and \emph{(3)}, $\mathcal{D}$ is co-orientable if and only if $m$ is odd.
\end{theorem*}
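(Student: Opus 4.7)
The plan is to extract a handle decomposition of $X$ from the BLF structure, constrain it by a monodromy relation in the mapping class group of the fibre, and then identify the diffeomorphism type via Kirby calculus. A BLF $f\colon X \to D^2$ should be determined up to equivalence by three pieces of data: the genus $g$ of the regular fibre $F = \Sigma_g$, an ordered list of vanishing cycles $v_1,\ldots,v_k \subset F$ for the interior Lefschetz critical points, and a simple closed curve $b \subset F$ whose collapse along $\partial D^2$ produces the submanifold $\mathcal{D}$. These data induce a handle decomposition of $X$: start with $F \times D^2$, attach a $2$-handle along a parallel copy of $v_i$ with framing $-1$ relative to $F$ for each Lefschetz critical point, and close off by attaching a ``boundary'' $2$-handle along $b$ in a nearby fibre together with a capping $3$-handle encoding the collapse.

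The main technical ingredient is a genus reduction. Parallel transport around $\partial D^2$ yields a relation in $\MCG_g = \pi_0 \operatorname{Diff}^+(F)$: the composition $T_{v_k}\cdots T_{v_1}$ of right-handed Dehn twists along the vanishing cycles must equal a Dehn twist $T_b$, up to a correction reflecting the behaviour of the normal bundle of $b$ in $F$. Under relative minimality each $v_i$ is essential, and standard results on positive factorizations of Dehn twists in $\MCG_g$ then force $g \leq 1$. This step is where I expect the main difficulty: ruling out $g \geq 2$ requires genuine mapping-class-group algebra rather than a formal manipulation, and one must be careful to track the contribution of the boundary twist $T_b$ on the right-hand side.

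Once $g \leq 1$ is established, the classification proceeds by explicit Kirby calculus on the resulting diagrams. In the $g = 1$ case ($F = T^2$), the rigidity of positive Dehn-twist factorizations in $\MCG_1$ restricts the admissible pictures sharply; the simplest option, with no Lefschetz singularities, gives $T^2 \times D^2$ with a non-separating curve $b \subset T^2$ collapsed along $\partial D^2$, which is directly recognisable as $S^1 \times S^3$, and the remaining options reduce back to the $g=0$ list by blowing down exceptional spheres produced by the Lefschetz handles. In the $g = 0$ case ($F = S^2$), the curve $b$ is null-homotopic and the Lefschetz $2$-handles become $\pm 1$-framed attachments along unknots in the Kirby diagram. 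Handle slides then exhibit $X$ as iterated blow-ups of $S^2 \times S^2$ or of the twisted $S^2$-bundle $S^2 \tilde{\times} S^2$ over $S^2$; applying the standard relation $(S^2 \times S^2) \# \CPbar \cong \CP \# 2\CPbar$ normalises the resulting connected sums into the stated families $\#m(S^2 \times S^2)$ and $\#m\CP \# n\CPbar$ with $m > n$.

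Finally, the auxiliary statements follow from the explicit models. Non-triviality of $[F]$ in $H_2(X \setminus \mathcal{D};\R)$ is verified by pairing the regular fibre with a transverse co-section of the boundary collapse inside each handle diagram. The co-orientability of $\mathcal{D}$ is controlled by whether the boundary monodromy preserves an orientation of the normal $S^1$-bundle to $b$ in $F$; a direct parity count shows this is automatic when $X = S^1 \times S^3$ and amounts precisely to $m$ being odd in the two remaining families.
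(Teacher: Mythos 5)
Your overall architecture (a handle decomposition built from vanishing-cycle data, a monodromy relation around $\partial D^2$, identification by Kirby calculus) matches the paper's, but two of your load-bearing steps are wrong or missing. First, the genus of the fibre is not controlled by mapping class group algebra. The local normal form of a boundary fibration near $\mathcal{D}$ shows that a regular fibre close to $\partial D^2$ is the boundary of the solid torus $f^{-1}(\eta_0)$ and fibres by circles over a circle fibre of $f|_{\mathcal{D}}\colon\mathcal{D}\to Z$; hence it is a torus (orientability of $X$ rules out the Klein bottle), and the boundary vanishing cycle is an essential meridian. So $g=1$ always, and your entire $g=0$ branch --- null-homotopic $b$, $\pm1$-framed unknots, iterated blow-ups of $S^2$-bundles over $S^2$ --- describes fibrations that do not exist. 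Moreover, the monodromy constraint is not ``$\prod T_{v_i}=T_b$ up to a correction'' but $\prod T_{v_i}=\pm\tau_\delta^{\,k}$ for an arbitrary integer $k$ (the Euler number of the normal bundle of $\mathcal{D}$), with the sign recording co-orientability; getting this target right is essential for everything downstream, including the parity statement.

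Second, in the $g=1$ case the phrase ``the rigidity of positive Dehn-twist factorizations in $\MCG(T^2)$ restricts the admissible pictures sharply'' is exactly where the content lives, and you have not supplied it. The paper imports Hayano's factorization theorem: every cycle system is Hurwitz equivalent to $(a;a,\dots,a,\,b+k_1a,\dots,b+k_ra)$ with $k_i-k_{i+1}\in\{1,2,3\}$ for some $i$, and then runs an induction on the number of Lefschetz singularities in which each step is a specific Kirby-calculus move splitting off $\CPbar$, $\CP$, or $S^2\times S^2$ according to whether $k_i-k_{i+1}$ equals $1$, $3$, or $2$. Your proposed reduction ``by blowing down exceptional spheres produced by the Lefschetz handles'' cannot be the mechanism: relative minimality forbids precisely those blow-downs, and the relatively minimal outputs $\#m(S^2\times S^2)$ and $\#m\CP\#n\CPbar$ are not blow-ups of anything on a smaller list --- they arise from the $\CP$- and $(S^2\times S^2)$-splitting moves, which are genuinely different handle slides, terminating in the base cases $S^1\times S^3$ and $S^4$. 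Without Hayano's normal form (or an equivalent analysis of factorizations of $\pm A^k$ into positive Dehn twists in $SL(2,\Z)$), the induction has nothing to stand on, and the constraint $m>n$ and the co-orientability criterion (one sign flip per summand contributing to $b_2^+$, anchored at $S^4$ with non-co-orientable $\mathcal{D}$) cannot be derived.
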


We use essentially the same methods that were used by Behrens~\cite{BehrensFoldsCusps} and Hayano~\cite{MR2801419,MR3251824}. 
We translate the problem into combinatorics in the mapping class group of the torus, and then translate combinatorial results back into geometry using handle decompositions and Kirby calculus. Hayano's work turns out to be particularly relevant.
In his classification of so-called genus-one simplified broken Lefschetz fibrations he was led to study monodromy factorizations of Lefschetz fibrations over the disc whose monodromy around the boundary is a signed power of a Dehn twist. It turns out that the same problem appears for \blfs.

\subsection*{Organization of the paper}
This paper is organised as follows. In \autoref{sec:bfs} we introduce the notions of boundary fibrations as well as boundary Lefschetz fibrations and summarise their basic properties. In \autoref{sec:blfs over D2} we start studying the easier question of classifying oriented boundary fibrations over $D^2$, then we move on to prove the main theorem. The proof uses a careful study of genus-one Lefschetz fibrations over the disc which allows us to use an induction argument on the number of singular fibres to achieve our goal.

\section{Boundary Lefschetz fibrations}\label{sec:bfs}

In view of our interest in stable generalized complex structures and the results mentioned in the Introduction, the basic object with which we will be dealing in this paper are boundary (Lefschetz) fibrations. In this section we review the relevant definitions and basic results regarding them. We will use the following language. A {\it pair} $(X,\mc{D})$ consists of a manifold $X$ and a submanifold $\mc{D} \subseteq X$. A {\it map of pairs} $f\colon (X,\mc{D}) \to (\Sigma,Z)$ is a map $f\colon X \to \Sigma$ for which $f(\mc{D}) \subseteq Z$. A {\it strong map of pairs} is a map of pairs $f\colon (X,\mc{D}) \to (\Sigma,Z)$ for which $f^{-1}(Z) = \mc{D}$.
\begin{definition}
Let $f\colon (X^{2n},\mathcal{D}^{2n-2}) \to (\Sigma^2,Z^1)$ be a strong map of pairs which is proper and for which $\mathcal{D}$ and $Z$ are compact.
\begin{itemize}
\item The map $f$ is a {\it boundary map} if the normal Hessian of $f$ along $\mathcal{D}$ is nondegenerate;
\item The map $f$ is a {\it boundary fibration} if it is a boundary map and the following two maps are submersions:
\begin{itemize}
\item[a)] $f|_{X\setminus \mathcal{D}}\colon X\setminus \mathcal{D} \to \Sigma\setminus Z$, and 
\item[b)]$f\colon \mathcal{D} \to Z$.
\end{itemize}
The condition that $f$ is a boundary fibration (in a neighbourhood of $\mathcal{D}$) is equivalent to the condition that for every $x \in \mathcal{D}$, there are coordinates $(x_1,\dots ,x_{2n})$ centred at $x$ and $(y_1,y_2)$ centred at $f(x)$ such that $f$ takes the form
\begin{equation}\label{eq:boundary singularity}
f(x_1,\dots ,x_{2n}) = (x_1^2 + x_2^2,x_3),
\end{equation}
where $\mathcal{D}$ corresponds to the locus $\{x_1=x_2=0\}$ and $Z$ to the locus $\{y_1=0\}$;
\item The map $f$ is a {\it boundary Lefschetz fibration} if $X$ and $\Sigma$ are oriented, $f$ is a boundary fibration from a neighbourhood of $\mathcal {D}$ to a a neighbouhood of $Z$ and $f|_{X^{2n}\setminus \mathcal{D}}\colon X\setminus \mathcal{D} \to \Sigma\setminus Z$ is a proper Lefschetz fibration, that is, for each critical point $x\in X\setminus \mathcal{D}$ and corresponding singular value $y \in \Sigma\setminus Z$, there are complex coordinates centred at $x$ and $y$ compatible with the orientations for which $f$ acquires the form
\begin{equation}\label{eq:Lefschetz singularity}
f(z_1,\dots, z_n) = z_1^2 + \cdots + z_n^2.
\end{equation}
\end{itemize}
\end{definition}


\begin{exa}[$S^1 \times S^3$]\label{ex:S1xS3}
In this example we provide $X = S^1 \times S^3$ with the structure of a \bff over the disc, as described in \cite[Example 8.3]{2017arXiv170303798C}. The map $f\colon S^1 \times S^3 \to D^2$ is a composition of maps, namely
	\begin{equation*}
	S^1 \times S^3 \to S^3 \to D^2,
	\end{equation*}
where the first map is projection onto the second factor and the last is the projection from $\C^2$ to $\C$, $(z_1,z_2) \mapsto z_1$, restricted to the sphere.
In \autoref{T:no Lefschetz} we will see that this is, in fact, the only example of a boundary fibration over~$D^2$.
\end{exa}

A few relevant facts about \blfs\ were established in \cite{2017arXiv170303798C}. Beyond the local normal form \eqref{eq:boundary singularity} for the map $f$ around points in $\mathcal{D}$ there is also a semi-global form for $f$ in a neighbourhood of $\mathcal{D}$:

\begin{theorem}[{\cite[Proposition 5.15]{2017arXiv170303798C}}]\label{theo:semilocal form}
Let $f\colon (X^{2n},\mathcal{D}^{2n-2}) \to (\Sigma^2,Z^1)$ be a boundary map which is a boundary fibration on neighbourhoods of $\mathcal{D}$ and $Z$ and for which $Z$ is co-orientable. Then there are
\begin{itemize}
\item neighbourhoods $U$ of $\mathcal{D}$ and $V$ of $Z$ and diffeomorphisms between these sets and neighbourhoods of the zero sections of the corresponding normal bundles, $\Phi_{\mathcal{D}}\colon U \to N_{\mathcal{D}}$ and $\Phi_Z\colon V \to \R \times Z$, and
\item a bundle metric $g$ on $N_{\mathcal{D}}$,
\end{itemize}
such that the following diagram commutes, where $\pi\colon N_{\mathcal{D}}\to \mathcal{D}$ is the bundle projection:
$$\xymatrix@R=18pt@C=12pt{
U\ar[rrrr]^{f}\ar[d]^{\Phi_{\mathcal{D}}} &~&~&~& V\ar[d]^{\Phi_Z}\\
N_{\mathcal{D}}\ar[rrrr]^{(\|\cdot\|^2_g,\, f|_{\mathcal{D}}\circ \pi)}&~&~&~& \R \times Z
}$$
\end{theorem}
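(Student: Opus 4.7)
The plan is to exploit the co-orientability of $Z$ to decouple the statement into two essentially independent normal-form problems along $\mathcal{D}$. First, a tubular neighbourhood of $Z$ furnishes the diffeomorphism $\Phi_Z\colon V \to \R \times Z$ sending $Z$ to $\{0\} \times Z$. Composing with $f|_U$ decomposes this map into a pair $(f_1,f_2)\colon U \to \R \times Z$ in which, by the local form \eqref{eq:boundary singularity}, $f_1$ vanishes along $\mathcal{D}$ with fibre-wise positive definite normal Hessian, while $f_2|_{\mathcal{D}}$ equals the submersion $f|_{\mathcal{D}}\colon \mathcal{D} \to Z$ and $f_2$ itself is a submersion on $U$ (after shrinking). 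It then suffices to construct a diffeomorphism $\Phi_{\mathcal{D}}\colon U \to N_{\mathcal{D}}$ and a bundle metric $g$ on $N_{\mathcal{D}}$ so that $f_1 = \|\cdot\|_g^2 \circ \Phi_{\mathcal{D}}$ and $f_2 = f|_{\mathcal{D}} \circ \pi \circ \Phi_{\mathcal{D}}$.

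I would handle the second equation first. Since both $f_2$ and $f|_{\mathcal{D}}$ are submersions onto $Z$ agreeing on $\mathcal{D}$, the level sets of $f_2$ form a codimension-one foliation of $U$ whose leaves meet $\mathcal{D}$ transversally in the fibres of $f|_{\mathcal{D}}$. A leaf-wise tubular neighbourhood theorem, globalised smoothly in $z \in Z$ using a connection for $f_2$, yields an intermediate tubular neighbourhood $\Phi_{\mathcal{D}}^0\colon U \to N_{\mathcal{D}}$ whose $\pi$-fibres lie inside leaves of $f_2$; equivalently, $f_2 = f|_{\mathcal{D}} \circ \pi \circ \Phi_{\mathcal{D}}^0$. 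For the first equation, the positive definiteness of the normal Hessian of $f_1$ means that, restricted to each normal disc $N_{\mathcal{D},x}$, the function $f_1 \circ (\Phi_{\mathcal{D}}^0)^{-1}$ is a Morse function with a nondegenerate minimum at the origin. A parameterised Morse lemma, for instance a fibre-wise Moser argument interpolating between $f_1$ and the positive definite quadratic form defined by its Hessian along $\mathcal{D}$, supplies a smooth family of vertical fibre diffeomorphisms bringing $f_1$ into the form $\|\cdot\|_g^2$ for some smooth bundle metric $g$.

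The main obstacle is carrying out the parameterised Morse lemma in a way that leaves the decomposition from the first step intact, but this is essentially automatic: the Morse-lemma correction is vertical with respect to $\pi$, so it permutes $\pi$-fibres within themselves and does not affect the identity $f_2 = f|_{\mathcal{D}} \circ \pi$. Composing the fibre-wise diffeomorphism with $\Phi_{\mathcal{D}}^0$ produces the desired $\Phi_{\mathcal{D}}$, and after possibly shrinking $U$ and $V$ to ensure everything is defined on genuine neighbourhoods of $\mathcal{D}$ and $Z$, the commutative square in the statement holds. Beyond the parameterised Morse step, which is standard but the most technical ingredient, all remaining moves are routine differential topology.
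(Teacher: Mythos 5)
The paper does not prove \autoref{theo:semilocal form}; it is imported verbatim by citation from the companion paper, so there is no in-text argument to compare against. Your proposal is correct and is the expected proof of this statement: split $\Phi_Z\circ f=(f_1,f_2)$ using the co-orientability of $Z$, build a tubular neighbourhood of $\mathcal{D}$ adapted to the level sets of the submersion $f_2$, and then apply a fibrewise (parameterised Morse, i.e.\ Morse--Bott) normal form to $f_1$ --- which is vertical over $\mathcal{D}$ and hence does not disturb the previous step --- to produce the bundle metric $g$; the only detail you leave implicit is that the trivialisation $\Phi_Z$ must be chosen so that the normal Hessian of $f_1$ is \emph{positive} definite on each component of $\mathcal{D}$, which is precisely the observation, made in the paper right after the statement, that the image of $f$ lies on one side of $Z$.
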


The most obvious consequence of this theorem is that in the description above, the image of $f$ lies on one side of $Z$, namely in $\R_+\times Z$. At this stage this is a local statement, but if $Z$ is separating (i.e., represents the trivial homology class) this becomes a global statement: the image of $f$ lies in closure of one component of $\Sigma \setminus Z$ and hence we can equally deal with $f$ as a map between $X$ and a manifold with boundary, $\Sigma$, whose boundary is $Z$. In this paper we will be concerned with the case when $\Sigma$ is the two-dimensional disc.

\begin{corollary}\label{cor:fibres are tori}
Let $f\colon (X^4,\mathcal{D}^2)\to (\Sigma^2,Z^1)$ be a boundary fibration with connected fibres and for which $Z$ is co-orientable and $X$ is connected and orientable. Then the generic fibre of $f$ is a torus.
\end{corollary}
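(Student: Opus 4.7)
The plan is to use the semi-global normal form given by \autoref{theo:semilocal form} to realize a regular fibre near $Z$ as an $S^1$-bundle over a $1$-submanifold of $\mathcal{D}$, and then to identify the resulting surface. First I would pick a regular value $p \in \Sigma \setminus Z$ sufficiently close to $Z$ that its fibre lies in the neighbourhood $U$ of $\mathcal{D}$ provided by the theorem; this is possible because $f$ is proper and $f^{-1}(Z) = \mathcal{D} \subseteq U$. Writing $\Phi_Z(p) = (t, z)$ with $t > 0$, the commutative diagram of \autoref{theo:semilocal form} identifies $f^{-1}(p)$ with
\[
\bigl\{ v \in N_{\mathcal{D}} : \|v\|_g^2 = t \text{ and } f|_{\mathcal{D}}(\pi(v)) = z \bigr\},
\]
that is, the restriction of the sphere bundle $\{\|v\|_g = \sqrt{t}\} \subset N_{\mathcal{D}}$ to the submanifold $(f|_{\mathcal{D}})^{-1}(z) \subseteq \mathcal{D}$. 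In particular, $f^{-1}(p)$ is an $S^1$-bundle over $(f|_{\mathcal{D}})^{-1}(z)$.

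Next I would identify the base. The restriction $f|_{\mathcal{D}} \colon \mathcal{D} \to Z$ is a submersion between a compact $2$-manifold and a compact $1$-manifold, so its fibre $(f|_{\mathcal{D}})^{-1}(z)$ is a compact $1$-manifold without boundary, hence a disjoint union of circles. Because $f^{-1}(p)$ is connected by hypothesis and fibres over $(f|_{\mathcal{D}})^{-1}(z)$ with connected fibre $S^1$, the base must also be connected, so $(f|_{\mathcal{D}})^{-1}(z) \cong S^1$. Therefore $f^{-1}(p)$ is a connected $S^1$-bundle over $S^1$, hence diffeomorphic to either the torus $T^2$ or the Klein bottle.

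Finally I would rule out the Klein bottle. The normal bundle of the regular fibre $f^{-1}(p)$ inside $X$ is $f^*(T_p \Sigma)$, the pullback of a two-dimensional vector space and hence trivial; together with the orientability of $X$, this forces $f^{-1}(p)$ to be orientable and therefore diffeomorphic to $T^2$. Since regular fibres within a connected component of $\Sigma \setminus Z$ are all mutually diffeomorphic, the generic fibre is $T^2$. The main obstacle I expect is setting up the identification of $f^{-1}(p)$ via the normal form and making the connectedness reduction from $f^{-1}(p)$ to $(f|_{\mathcal{D}})^{-1}(z)$ clean; the remaining steps are standard arguments using the classification of circle bundles over the circle.
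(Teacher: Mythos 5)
Your proposal is correct and follows essentially the same route as the paper: both use the semi-global normal form of \autoref{theo:semilocal form} to exhibit a nearby regular fibre as an $S^1$-bundle over a circle in $\mathcal{D}$, and then rule out the Klein bottle by an orientability argument (the paper via orientability of $N_{\mathcal{D}}\setminus\mathcal{D}$ and the fibration structure, you via triviality of the normal bundle of the fibre in the orientable $X$ --- the same fact in slightly different clothing). Your write-up is in fact a bit more careful about why the base circle is connected.
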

\begin{proof}
From \autoref{theo:semilocal form} we see that the level set $f^{-1}\circ \Phi_Z^{-1}(\varepsilon,y)$ with $\varepsilon >0$ is a surface which fibres over the level set of $f^{-1}\circ \Phi_Z^{-1}(0,y)$, which is a circle, hence $f^{-1}\circ \Phi_Z^{-1}(\varepsilon,y)$ must be a torus or a Klein bottle. If $X$ is orientable, $N_{\mathcal{D}} \setminus \mathcal{D}$ is also orientable and due to \autoref{theo:semilocal form}, $\Phi_Z\circ f\circ \Phi_{\mathcal{D}}^{-1}\colon U \subset N_{\mathcal{D}} \setminus \mathcal{D}\to \R \times Z\setminus\{0\}\times Z$ is a fibration, where $U$ is a neighbourhood of $\mathcal{D}$, hence the fibres must be orientable.
\end{proof}

\begin{remark}
In the case when $X$ is connected, $\Sigma$ is a surface with boundary $Z = \del \Sigma$, and $f\colon X \to \Sigma$ is surjective, we can lift $f$ to a cover of $\Sigma$ so that the fibres of the boundary Lefschetz fibration become connected. That is, this particular hypothesis  is not really a restriction on the fibration (see \cite[Proposition 5.23]{2017arXiv170303798C}). In what follows we will always assume this is the case.
\end{remark}

\begin{remark} As shown in \cite[Proposition 6.8]{2017arXiv170303798C}, a boundary Lefschetz fibration $f\colon (X^4, \mc{D}^2) \to (D^2, \del D^2)$ satisfies $\chi(X) = \mu$, where $\mu$ is the number of Lefschetz singular fibres.
\end{remark}

\subsection{Vanishing cycles and monodromy}
Lefschetz fibrations on four-manifolds can be described combinatorially in terms of their monodromy representations and vanishing cycles.
We now extend this approach to \blfs.
For simplicity, we focus on fibrations over the disc and assume that they are injective on their Lefschetz singularities.
The latter condition can always be achieved by a small perturbation and the generalization to general base surfaces is exactly as in the Lefschetz case.

\begin{definition}[Hurwitz systems]\label{D:Hurwitz systems}
Let $f\colon (X^4,\mathcal{D}^2)\ra (D^2,\del D^2)$ be a \blf with~$\ell$ Lefschetz singularities, and let $y\in D^2$ be a regular value.
A \emph{Hurwitz system} for~$f$ based at~$y$ is a collection of embedded arcs $\eta_0,\eta_1,\dots,\eta_\ell\subset D^2$ such that
\begin{enumerate}
\item
$\eta_0$ connects~$y$ to~$\del D^2$ and is transverse to~$\del D^2$,
\item
$\eta_i$ connects~$y$ to a critical value~$y_i$,
\item
the arcs intersect pairwise transversely in~$y$ and are otherwise disjoint, and
\item 
the order of the arcs is counterclockwise around~$y$.
\end{enumerate}
\end{definition}

Given a Hurwitz system, we obtain a collection of simple closed curves in the regular fibre $F_y=f\inv(y)$ as follows.
%
For $i>0$ we have the classical construction of \emph{Lefschetz vanishing cycles}:
as we move from $y$ along~$\eta_i$ towards~$y_i$, a curve $\lambda_i\subset F_y$ shrinks and eventually collapses into Lefschetz singularity over~$y_i$, leading to a nodal singularity in~$F_{y_i}$. 
For later reference, we also recall that the monodromy along a counterclockwise loop around~$y_i$ contained in a neighbourhood of~$\eta_i$ is given by a right-handed Dehn twist about~$\lambda_i$.
%
Along $\eta_0$ we see a slightly different degeneration:
the boundary of a solid torus degenerates the core circle.
Indeed, using the local model for~$f$ near~$\mc D$ and the transversality of $\eta_0$ to~$\del D^2$ we can find a diffeomorphism~$f\inv(\eta_0)\cong D^2\times S^1$ and a parameterization of~$\eta_0$ that takes~$f$ into the function~$D^2\times S^1 \to \R \times Z$ given by $(x_1,x_2,\theta)\mapsto (x_1^2+x_2^2,z_0)$, where $z_0 = \eta_0(1)$. To summarize, $f\inv(\eta_0)$ is a solid torus whose boundary is~$F_y$. Further~$F_y$ contains a well-defined isotopy class of meridional circles, represented in the model by $\del D^2\times\{\theta\}$ for arbitrary~$\theta\in S^1$.
We will henceforth refer to this isotopy class as the \emph{boundary vanishing cycle} associated to~$\eta_0$ and denote it by~$\delta$.

To make things even more concrete, we can fix an identification of the reference fibre~$F_y$ with~$T^2$ and consider the vanishing cycles in the standard torus.
To make a notational distinction, we denote the images in $T^2$ by $(a;b_1,\dots,b_\ell)$.

\begin{definition}[Cycle systems]\label{D:cycle systems}
A collection of curves $(a;b_1,\dots,b_\ell)$ in~$T^2$ associated to~$f$ by a choices of a Hurwitz system and an identification of the reference fibre with~$T^2$ is called a \emph{cycle system} for~$f$.
\end{definition}

It is well known that the Lefschetz part of~$f$ can be recovered from the Lefschetz vanishing cycles.
In the next section we will explain how this statement extends to \blfs.
Just as in the Lefschetz case, the cycle system is not unique but the ambiguities are easy to understand and provide some flexibility to find particularly nice cycle systems representing a given \blf.
The following is a straightforward generalization of the analogous statement for Lefschetz fibrations, see also \autoref{fig:change of base}.

\begin{proposition}\label{T:Hurwitz equivalence} Let $f\colon (X^4,\mathcal{D}^2)\ra (D^2,\del D^2)$ be a \blf with~$\ell$ Lefschetz singularities. Any two cycle systems for~$f$ are related by a finite sequence of the following modifications:
	\begin{align*}
	\big( a ; b_1,\dots,b_\ell \big), &\sim
	\big( a ; b_2,B_2(b_1),b_3,\dots,b_\ell \big), \\ &\sim
	\big( a ; B_1\inv(b_2),b_1,b_3,\dots,b_\ell \big), \\ &\sim
	\big( B_1(a) ; b_2,\dots,b_\ell,b_1 \big), \\&\sim
	\big( B_\ell\inv(a) ; b_\ell,b_1,\dots,b_{\ell-1} \big), \\&\sim
	\big( h(a) ; h(b_1),\dots,h(b_\ell) \big).
	\end{align*}
Here $B_i=\tau_{b_i}$ is a right-handed Dehn twist about~$b_i$ and $h$~is any diffeomorphism of~$T^2$.
\end{proposition}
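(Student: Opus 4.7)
The plan is to verify that each of the five listed modifications arises from a natural operation on the data underlying a cycle system, namely the Hurwitz system together with the identification $F_y \cong T^2$, and then to show that these operations generate all transitions between cycle systems for the same \blf $f$. The moves fall into three groups: (i) moves (1) and (2) are interior Hurwitz exchanges among adjacent arcs $\eta_i, \eta_{i+1}$ with $i \geq 1$; (ii) moves (3) and (4) rotate the boundary arc $\eta_0$ past $\eta_1$ or $\eta_\ell$; (iii) move (5) records a change of the identification of the reference fibre with $T^2$.

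The easiest case is move (5): if we precompose the identification $F_y \cong T^2$ by a diffeomorphism $h$, then every vanishing cycle transforms by $h$ and the stated formula is immediate. For moves (1) and (2), the classical argument for \lfs applies verbatim, since the relevant manipulation of $\eta_i$ and $\eta_{i+1}$ can be carried out in the complement of $\eta_0$, of $\del D^2$, and of the remaining critical values. Concretely, one isotopes the two arcs past each other, and tracking the change in the loops around the critical values, via the local monodromy formula $\tau_{b_j}$ at $y_j$, gives the Hurwitz formulas listed.

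The new content lies in moves (3) and (4), where we must also track the effect on the boundary vanishing cycle. For move (3), we rearrange the Hurwitz system so that the new boundary arc $\eta_0'$ is $\eta_0$ pushed counterclockwise past $\eta_1$, while the new last interior arc $\eta_\ell'$ is $\eta_1$ repositioned to come after $\eta_0$ in the cyclic order around $y$; the intermediate arcs $\eta_2, \dots, \eta_\ell$ are merely relabelled as $\eta_1', \dots, \eta_{\ell-1}'$. Since $\eta_0'$ now encircles $y_1$ counterclockwise once before reaching $\del D^2$, the meridional circle of the solid torus $f\inv(\eta_0')$ is transported across this loop, and the local Lefschetz monodromy $\tau_{b_1}$ converts $a$ into $B_1(a)$. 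The arc $\eta_\ell'$, being isotopic to $\eta_1$ in the complement of the critical values, retains the vanishing cycle $b_1$. Move (4) is the inverse manipulation, obtained by pushing $\eta_0$ clockwise past $\eta_\ell$, yielding the twist $B_\ell\inv$.

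The main obstacle, and the step that deserves most care, is the combinatorial assertion that these five modifications exhaust all possibilities. This reduces to a statement about arc systems on the disc with $\ell$ interior marked points and one distinguished boundary point: the space of such systems up to ambient isotopy rel endpoints is connected, and adjacent systems differ by one of the local exchanges described in groups (i) and (ii). One handles this by first using moves (3) and (4) to rotate $\eta_0$ into any prescribed position in the cyclic order around $y$, and then invoking the classical Hurwitz equivalence for \lfs applied to the remaining interior arcs. Allowing different choices of base point $y$ and of identification of $F_y$ with $T^2$ introduces only the ambiguity of move (5), since parallel transport along a path in the regular locus together with a reparameterisation of $T^2$ realises any diffeomorphism, completing the proof.
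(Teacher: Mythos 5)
Your proof is correct and follows essentially the same route the paper takes: the paper offers no written argument, only the remark that this is a straightforward generalization of Hurwitz equivalence for Lefschetz fibrations together with Figure~\ref{fig:change of base} illustrating how the exchange of adjacent interior arcs and the rotation of the boundary arc $\eta_0$ past $\eta_1$ or $\eta_\ell$ produce the listed moves. Your write-up fleshes out exactly those ingredients --- interior Hurwitz exchanges, the twisting of the boundary vanishing cycle by the local Lefschetz monodromy when $\eta_0$ crosses a critical value, change of fibre identification, and the connectivity of the space of Hurwitz systems --- so there is nothing substantively different to compare.
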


\begin{figure}[h!!]
\begin{center}
\includegraphics[height=8.5cm]{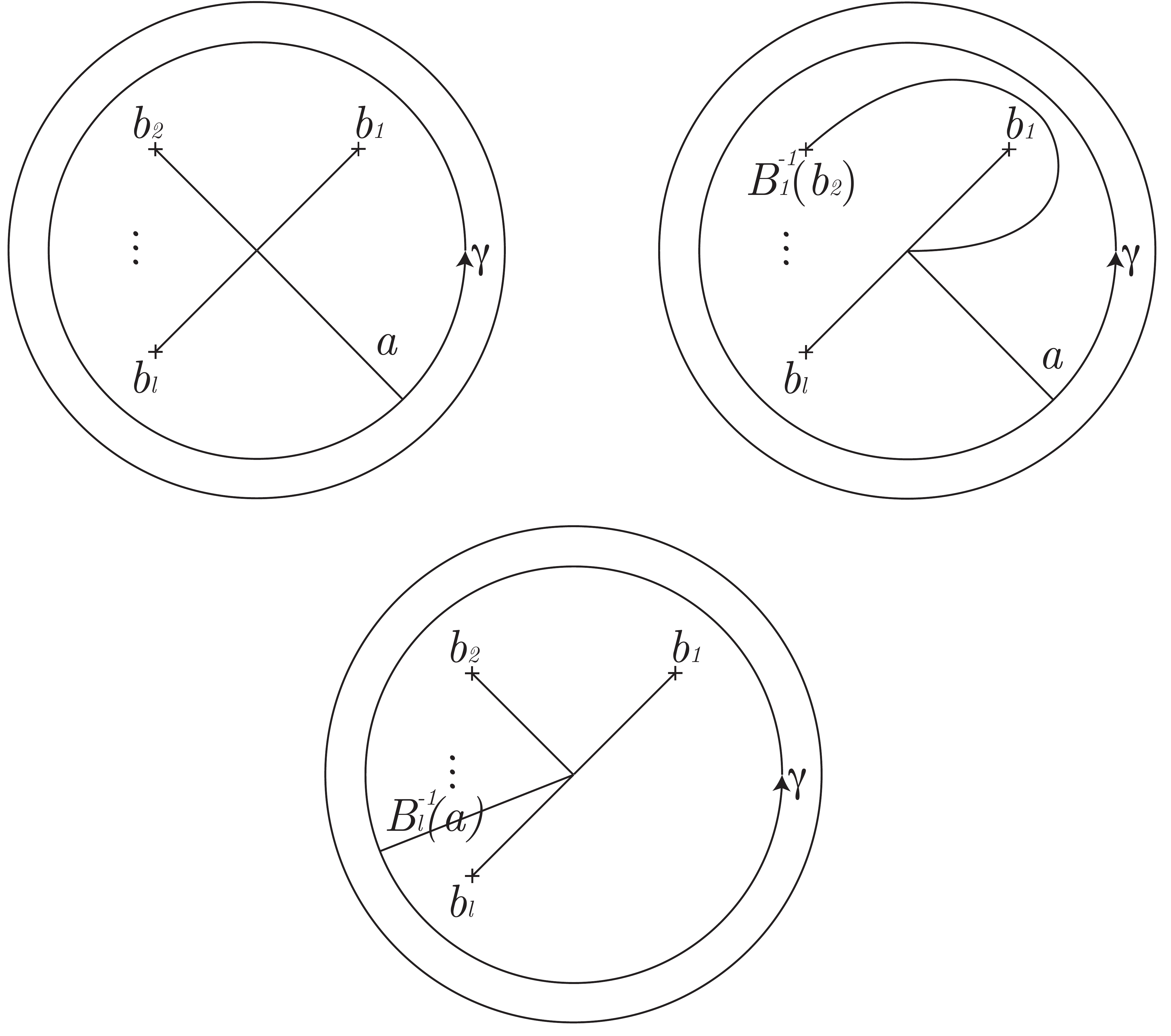} \caption{The origin of Hurwitz equivalence: here we illustrate how the equivalences $\big( a ; b_1,\dots,b_\ell \big) \sim ( a ; B_1\inv(b_2),b_1,b_3,\dots,b_\ell \big)\sim  \big( B_\ell\inv(a) ; b_\ell,b_1,\dots,b_{\ell-1} \big)$ arise. } \label{fig:change of base}
\end{center}
\end{figure}

\begin{definition}[Hurwitz equivalence]\label{D:Hurwitz equivalence}
If two cycle systems are related by the modifications listed in \autoref{T:Hurwitz equivalence}, we say that they are \emph{(Hurwitz) equivalent}.
\end{definition}

It turns out that the curves in a cycle system are not completely arbitrary.
Let $S^1_r\subset D^2$ be the circle of radius~$r<1$ such that all the Lefschetz singularities of~$f$ map to the interior of~$D^2_r$.
Fix a reference point let~$y\in S^1_r$ and let 
	\begin{equation*}
	\mu(f) \in \MCG(F_y)=\pi_0\mathrm{Diff}^+(F_y)
	\end{equation*}
be the counterclockwise monodromy of~$f$ around~$S^1_r$ as measured in the mapping class group of~$F_y$.
Then for any cycle system for~$f$ derived from a Hurwitz system based at~$y$ the counterclockwise monodromy of~$f$ around~$S^1_r$ measured in~$F_y$ is given by the product of Dehn twists about the Lefschetz vanishing cycles~$\lambda_i\subset F_y$,
	\begin{equation}\label{eq:monodromy}
	\mu(f) = \tau_{\lambda_\ell}\circ\dots\circ \tau_{\lambda_1} \in \MCG(F_y)=\pi_0\mathrm{Diff}^+(F_y).
	\end{equation}
On the other hand, we can also describe the monodromy using the boundary part of the fibration.
Recall that~$f\inv(S^1_r)$ is the boundary of a tubular neighbourhood~$N\mc D$ of~$\mathcal{\mc D}$ and that the fibration structure over~$S^1_r$ essentially factors through the projection~$N\mc D\ra \mc D$.
This exhibits $f\inv(S^1_r)$ as a circle bundle over~$\mc D$, which is itself a circle bundle over~$S^1$.
It follows that the monodromy of~$f$ around~$S^1_r$ must fix the circle fibres of~$f\inv(S^1_r)\ra\mc D$, and the circle fibre contained in~$F_y$ is precisely the boundary vanishing cycle~$\delta$ of the Hurwitz system. 
To conclude, $\mu(f)$ fixes~$\delta$ as a set, but not necessarily pointwise.
Indeed, it can (and does) happen that $\mu(f)$ reverses the orientation of~$\delta$.

\begin{remark}\label{R:the torus rules}
At this point, it is worthwhile to point out some perks of working on a torus.
First, there is the fact that any diffeomorphism of~$T^2$ is determined up to isotopy by its action on~$H_1(T^2)$.
Given any pair of oriented simple closed curves~$a,b\subset$ with (algebraic) intersection number~$\scp{a,b}=1$ --- called \emph{dual pairs} from now on --- we get an identification $\MCG(T^2)\cong SL(2,\Z)$.
Moreover, the right-handed Dehn twists $A,B\in\MCG(T^2)$ about~$a$ and~$b$ are the generators in a finite presentation with relations $ABA=BAB$ and $(AB)^6=1$.
In particular, we have that $(AB)^3$ maps to~$-1\in SL(2,\Z)$, which we will also denote by writing $-1=(AB)^3\in\MCG(T^2)$.
Second, in a similar fashion, simple closed curves up to ambient isotopies are uniquely determined by their (integral) homology classes.
Note that this involves a choice of orientation, since simple closed curves are a priori unoriented objects.
However, it is true that essential simple closed curves in~$T^2$ correspond bijectively with primitive elements of~$H_1(T^2)$ up to sign.
In what follows we adopt the common bad habit of identifying simple closed curves with elements of~$H_1(T^2)$ without explicitly mentioning orientation.
In particular, we will freely use the homological expression for a Dehn twist, i.e.\ write
	\begin{equation}\label{eq:Picard-Lefschetz}
	\tau_c(d) = d + \scp{c,d} c \in H_1(T^2).
	\end{equation}
We record two facts that are important for our purposes:
\begin{enumerate}
\item
If $h\in\MCG(T^2)$ satisfies $h(a)=a$ for some essential curve~$a$, then $h=\pm\tau_a^k$ for some~$k$ with a negative sign if and only if $h$ is orientation-reversing on~$a$;
\item 
If oriented curves~$a,b,c\subset T^2$ satisfy $\scp{a,b}=\scp{a,c}=1$, then $c=\tau_a^k(b) = b+ka$.
\end{enumerate}
\end{remark}

Returning to the discussion of the monodromy~$\mu(f)$, we can conclude that the vanishing cycles have to satisfy the condition
	\begin{equation*}
	\mu(f) = \tau_{\lambda_\ell}\circ\dots\circ \lambda_{\lambda_1} = \pm\tau_\delta^k \in\MCG(F_y).
	\end{equation*}
It is easy to see from the above discussion that a negative sign appears if and only if $\mc D$ fails to be co-orientable.
Moreover, the integer~$k$ is precisely the Euler number of the normal bundle of~$\mc D$ in~$X$. Here we remark that a vector bundle $E\ra M$ with $M$ compact has a well-defined integer Euler number if the total space of $E$ is orientable, even if~$M$ is not orientable itself.

For practical purposes, it is more convenient to work with cycle systems in the model~$T^2$.
Here is the upshot of the above discussion:

\begin{proposition}\label{T:compatibility}
Let $f\colon (X^4,\mc D^2)\ra (D^2,\del D^2)$ be a \blf.
If $(a;b_1,\dots,b_\ell)$ is any cycle system for~$f$, then
	\begin{equation}\label{eq:compatibility condition}
	B_\ell\circ\dots\circ B_1 = \pm A^k \in \MCG(T^2)
	\end{equation}
for some~$k\in \Z$, where the sign is positive if and only if~$\mc D$ is co-orientable.
The integer~$k$ agrees with the Euler number of the normal bundle of~$\mc D$ in~$X$.
\end{proposition}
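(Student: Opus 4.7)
The plan is essentially to package the geometric discussion preceding the proposition into an algebraic statement in $\MCG(T^2)$, and then invoke the torus facts from \autoref{R:the torus rules}. First I would transport the monodromy identity \eqref{eq:monodromy} across the fixed identification $F_y \cong T^2$: each Lefschetz vanishing cycle $\lambda_i$ becomes the curve $b_i$ and the corresponding right-handed Dehn twist $\tau_{\lambda_i}$ becomes $B_i$. This turns \eqref{eq:monodromy} into
\begin{equation*}
\mu(f) = B_\ell\circ\dots\circ B_1 \in\MCG(T^2),
\end{equation*}
so the claim reduces to showing that $\mu(f)$ is of the form $\pm A^k$.

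Next I would verify that $\mu(f)$ preserves the curve $a$ (representing the boundary vanishing cycle $\delta$) as an unoriented essential curve in $T^2$. This is already observed in the paragraphs leading up to the statement: the preimage $f^{-1}(S^1_r)$ is the unit sphere bundle of $N\mc D$ fibred as a circle bundle over $\mc D$, the fibre sitting in $F_y$ is a representative of $\delta$, and the monodromy around $S^1_r$ must permute these fibres. Applying item~(1) of \autoref{R:the torus rules} to $\mu(f)$ then yields $\mu(f) = \pm A^k$ for some $k\in\Z$, with the minus sign occurring precisely when $\mu(f)$ reverses orientation on $a$.

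The remaining task is to match the two ambiguities with their geometric meanings. For the sign, I would argue that a consistent orientation on $\delta$ across the monodromy loop exists if and only if the $S^1$-bundle $\mc D\to Z$ admits a normal co-orientation in $X$, i.e.\ if and only if $\mc D$ is co-orientable; this is exactly the condition that distinguishes $+A^k$ from $-A^k$. For the integer $k$, I would use \autoref{theo:semilocal form} to model $f$ near $\mc D$ as the square-of-the-norm map on $N\mc D$. Under this model the restriction $f^{-1}(S^1_r)\to S^1_r$ is obtained by pulling back the unit circle bundle of $N\mc D$, and its monodromy is by construction a rotation of the fibre circle by the amount dictated by the Euler number of $N\mc D$; expressed as a power of the Dehn twist $A$ about $a=\delta$, this gives $k = e(N\mc D)$. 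I expect the main obstacle to be this last identification: one has to be careful with signs and with the fact, highlighted just before the statement, that $e(N\mc D)$ is well-defined as an integer even when $\mc D$ is not orientable, as long as the total space of $N\mc D$ is.
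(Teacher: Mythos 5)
Your proposal is correct and follows essentially the same route as the paper, which states \autoref{T:compatibility} explicitly as the ``upshot'' of the preceding discussion: transporting \eqref{eq:monodromy} to the model torus, noting that the monodromy preserves the boundary vanishing cycle setwise because $f\inv(S^1_r)$ is a circle bundle over $\mc D$, invoking item~(1) of \autoref{R:the torus rules}, and identifying the sign with co-orientability and $k$ with the Euler number of $N\mc D$. No substantive difference to report.
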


This motivates an abstract definition without reference to \blfs.

\begin{definition}[Abstract cycle systems]\label{D:abstract cycle systems}
An ordered collection of curves $(a;b_1,\dots,b_\ell)$ in~$T^2$ is called an \emph{abstract cycle system} if it satisfies the condition in~\eqref{eq:compatibility condition}.
The notion of \emph{Hurwitz equivalence} is defined exactly as in \autoref{D:Hurwitz equivalence}.
\end{definition}


\subsection{Handle decompositions and Kirby diagrams}
\label{ch:handle decompositions}

Next we discuss how to recover \blfs from their cycle systems.
Along the way, we exhibit useful handle decompositions of total spaces of \blfs. 

\begin{proposition}\label{T:building bLFs}
Any abstract cycle system $(a;b_1,\dots,b_\ell)$ is the cycle system of some \blf over the disc.
\end{proposition}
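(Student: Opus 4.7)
The plan is to build the total space in two pieces -- a Lefschetz part over a small concentric sub-disc and a boundary part over the complementary annulus -- and then glue the pieces along their common $T^2$-bundle over $S^1$, using the compatibility condition \eqref{eq:compatibility condition} to ensure that the gluing is possible.

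For the Lefschetz part, I would invoke the standard Kas-type construction: given the curves $b_1,\dots,b_\ell\subset T^2$, pick a basepoint $y\in\mathrm{int}\,D^2_{1/2}$, distinct target values $y_1,\dots,y_\ell$ in $\mathrm{int}\,D^2_{1/2}$, and counterclockwise-ordered arcs from $y$ to the $y_i$, then build a Lefschetz fibration $f_L\colon X_L\to D^2_{1/2}$ with fibre $T^2$ whose vanishing cycle along the $i$-th arc is $b_i$. By the standard formula \eqref{eq:monodromy}, the counterclockwise monodromy of $f_L$ around $\del D^2_{1/2}$ equals $B_\ell\circ\cdots\circ B_1\in\MCG(T^2)$, which by hypothesis is $\pm A^k$.

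For the boundary part, I would build the explicit semi-local model from \autoref{theo:semilocal form}. Choose a circle bundle $\pi_{\mc D}\colon \mc D\to S^1=\del D^2$ that is a torus when the sign in \eqref{eq:compatibility condition} is $+$ and a Klein bottle when it is $-$, and a rank-$2$ real vector bundle $p\colon N\to \mc D$ with oriented total space and Euler number~$k$. Equip $N$ with a bundle metric $g$ and define $f_N\colon N\to\R\times S^1$ by $f_N(v)=(\|v\|_g^2,\pi_{\mc D}(p(v)))$. Restricting to $\{v:\|v\|_g^2\le 1/4\}$ and identifying the image with a collar $[0,1/4]\times\del D^2$ of $\del D^2$ in $D^2$, this is a boundary fibration over the annulus $D^2\setminus \mathrm{int}\,D^2_{1/2}$ whose zero locus $\mc D$ has normal Euler number $k$, and whose monodromy on the inner boundary circle is exactly a Dehn twist power about the meridional class of $\{\|v\|_g=\mathrm{const}\}\cap p\inv(\mathrm{fibre})$; by construction this monodromy agrees with~$\pm A^k$ when the meridional class is identified with~$a\in H_1(T^2)$.

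It remains to glue $X_L$ and $N$ along their common $T^2$-bundle over the circle $\del D^2_{1/2}$. Here is where I expect the only real subtlety: I need a fibre-preserving diffeomorphism between the boundaries that covers the identity on the base and identifies the meridional class on the $N$-side with the class of~$a$ on the $X_L$-side. Since two $T^2$-bundles over $S^1$ are isomorphic precisely when their monodromies agree in $\MCG(T^2)$, and both monodromies equal $\pm A^k$, such an identification exists; by \autoref{R:the torus rules}(1), the residual freedom in the identification allows me to arrange that the meridional circle matches the chosen curve~$a$. The glued map $f\colon X\to D^2$ is then a boundary Lefschetz fibration, and by tracing through the construction one sees that the arc $\eta_0$ running radially from $y$ to $\del D^2$ together with the arcs used in step one form a Hurwitz system whose associated cycle system is precisely $(a;b_1,\dots,b_\ell)$.
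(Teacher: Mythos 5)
Your proposal is correct and follows essentially the same route as the paper: both decompose the total space into a genus-one Lefschetz fibration over an inner disc with vanishing cycles $b_1,\dots,b_\ell$ (your Kas construction is exactly the paper's handle attachment to $T^2\times D^2$) glued to a disc bundle over a torus or Klein bottle carrying the boundary-fibration model over the outer annulus, with the compatibility condition $B_\ell\circ\dots\circ B_1=\pm A^k$ guaranteeing that the two $T^2$-bundles over the middle circle match up. The only cosmetic difference is that you build the boundary piece directly from the semi-local normal form of \autoref{theo:semilocal form}, whereas the paper cites the existence of the boundary fibration on the disc bundle from earlier work and phrases the gluing in terms of handle decompositions, which it then reuses for the Kirby-calculus arguments later on.
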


\begin{proof}
We will build a four-manifold obtained by attaching handle to~$T^2\times D^2$.
We choose points $\theta_0,\dots,\theta_\ell\in \del D^2$ which appear in counterclockwise order and consider a copy of~$a$ in~$T^2\times\{\theta_0\}$ and of~$b_i$ in~$T^2\times\{\theta_i\}$ for~$i>0$.
Note that for all these curves there is a natural choice of framing determined by parallel push-offs inside the fibres of~$T^2\times S^1\ra S^1$.
We first attach $2$-handle along the copies of~$b_i$ for $i>0$ with respect to the fibre framing~$-1$ and call the resulting manifold~$Z$.
It is well known that the projection~$T^2\times D^2$ extends to a Lefschetz fibration on~$Z$ over a slightly larger disc, which we immediately rescale to~$D^2$, such that the Lefschetz vanishing cycles along the straight line from~$\theta_i$ to zero is~$b_i$.
By construction, the boundary fibres over~$S^1$ and the counterclockwise monodromy measured in~$T^2\times\{\theta_0\}$ is $B_\ell\circ\dots\circ B_1=\pm A^k$.
In particular, $\del Z$ is diffeomorphic as an oriented manifold to the circle bundle with Euler number~$k$ over the torus or the Klein bottle.
Let $N_{-k}^\pm$ be the corresponding disc bundle with Euler number~$-k$.
Then $\del N_{-k}^\pm$ is diffeomorphic to~$\del Z$ with the orientation reversed so that we can form a closed manifold~$X$ by gluing~$Z$ and~$N_{-k}^\pm$ together, and the orientation of~$Z$ extends.
Moreover, it was shown in~\cite{2017arXiv170303798C} that $N_{-k}^\pm$ admits a boundary fibration over the annulus which can be used to extend the Lefschetz fibration on~$Z$ to a \blf on~$X$, again over a larger disc which we recale to~$D^2$, in such a way that the boundary vanishing cycle along the straight line from~$\theta_0$ to zero is~$a$.

Thus we have found a \blf together with a Hurwitz system which produces the desired cycle system.
\end{proof}
\begin{remark}[Construction of the Kirby diagram]\label{R:Kirby}
Observe that the gluing of~$N_{-k}^\pm$ also has an interpretation in terms of handles.
It is well known that $N_{-k}^\pm$ has a handle decomposition with one $0$-handle, two $1$-handles, and a single $2$-handle.
Turning this decomposition upside down gives a relative handle decomposition on~$-\del N_{-k}^\pm\cong \del Z$ with a single $2$-handle, two $3$-handles, and a $4$-handle.
Moreover, the $2$-handle can be chosen such that its core disc is a fibre. 
In particular, since the gluing of $N_{-k}^\pm$ to~$Z$ preserves the circle fibration, can arrange that the $2$-handle of~$N_{-k}^\pm$ is attached along the copy of~$a$ in the fibre of~$\del Z$ over~$\theta_0$.
However, in contrast to the Lefschetz handles, this time the framing is actually the fibre framing.

To summarize, the closed four-manifold~$X$ is obtained from $T^2\times D^2$ by attaching, in order, a $2$-handle along the boundary vanishing cycle with the  fibre framing, and then $2$-handles along the Lefschetz vanishing cycles $b_i\subset T^2\times\{\theta_i\}$ with fibre framing~$-1$. The two $3$-handles as well as the $4$-handle attach uniquely by Laudenbach--Po\'{e}naru.

As an illustration of this procedure, \autoref{fig:aa} shows the Kirby diagrams corresponding to the abstract cycle systems $(a;a)$ and $(a;b+2a,b)$, where $\{a,b\}$ is a dual pair of curves.
\begin{figure}[h!!]
\begin{center}
\includegraphics[height=4.7cm]{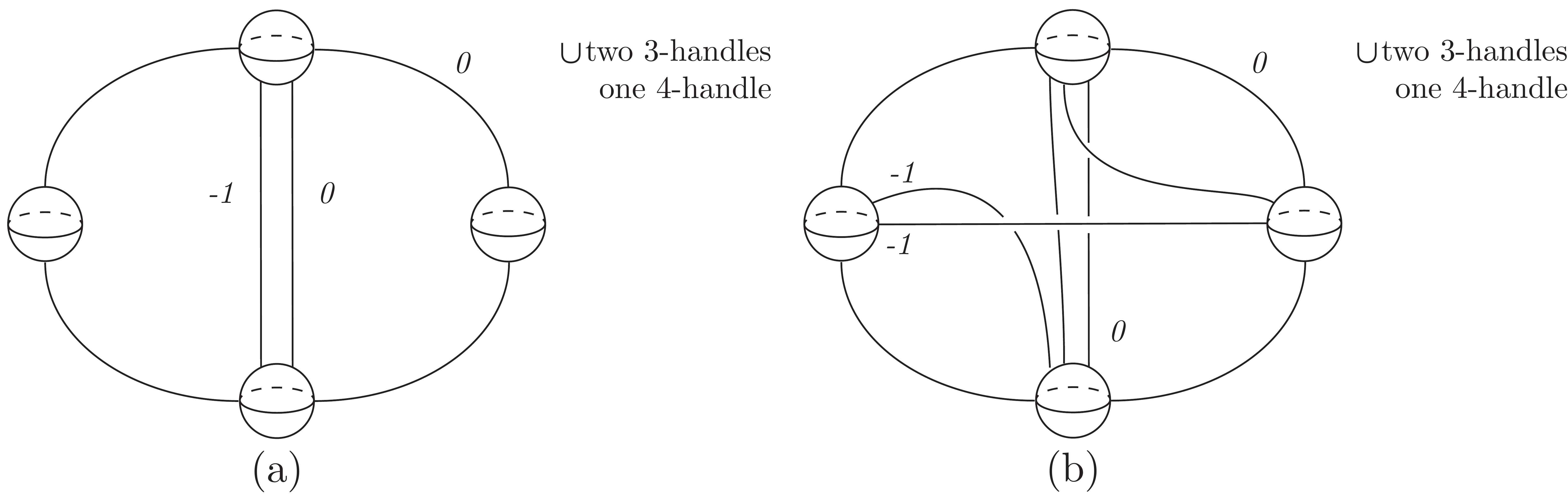} \caption{Kirby diagrams corresponding to the abstract cycle systems $(a;a)$ (Figure (a)) and $(a;b+2a,b)$ (Figure (b)). The numbers indicate the blackboard framing of the corresponding $2$-handles.} \label{fig:aa}
\end{center}
\end{figure}
\end{remark}

Next we show that the topology of the total space of a \blf can be recovered from the cycle system.
\begin{proposition}\label{T:recovering bLFs}
If two \blfs over the disc have equivalent cycle systems, then their total spaces are diffeomorphic.
\end{proposition}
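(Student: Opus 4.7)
The plan is to reduce the problem to two intermediate claims and combine them:
\emph{(i)} any \blf $f\colon(X,\mc D)\to(D^2,\del D^2)$ with cycle system $(a;b_1,\dots,b_\ell)$ is diffeomorphic to the manifold $X(a;b_1,\dots,b_\ell)$ produced from this data by the handle construction of \autoref{R:Kirby}; and
\emph{(ii)} the manifold $X(a;b_1,\dots,b_\ell)$ depends, up to diffeomorphism, only on the Hurwitz equivalence class of the abstract cycle system. Applying \emph{(i)} to both \blfs and then \emph{(ii)} then yields the claim.

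For \emph{(i)}, I would pick a radius $\rho\in(0,1)$ such that all Lefschetz critical values of $f$ lie in the open disc $D^2_\rho$ and write $X=X_L\cup X_B$ with $X_L=f\inv(D^2_\rho)$ and $X_B=f\inv(D^2\setminus\mathrm{int}\,D^2_\rho)$. The piece $X_L$ is a genuine Lefschetz fibration over the disc with vanishing cycles $b_1,\dots,b_\ell$, so by the standard Kirby picture of such fibrations it is diffeomorphic to $T^2\times D^2$ with $2$-handles attached along parallel push-offs of $b_i$ in $T^2\times\{\theta_i\}$ with fibre framing $-1$. The piece $X_B$ is a boundary fibration over an annulus with no Lefschetz singularities; by \autoref{theo:semilocal form} it is a tubular neighbourhood of $\mc D$, and such neighbourhoods are classified up to fibration-preserving diffeomorphism by the Euler number of the normal bundle of $\mc D$ together with the co-orientability of $\mc D$. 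By \autoref{T:compatibility} this identifies $X_B$ with the disc bundle $N^\pm_{-k}$, and the matching of $X_L$ and $X_B$ along the circle fibration on $\del D^2_\rho$ recovers precisely the construction of \autoref{T:building bLFs}.

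For \emph{(ii)}, it suffices to realise each elementary Hurwitz move of \autoref{T:Hurwitz equivalence} as a diffeomorphism of $X(a;b_1,\dots,b_\ell)$. The global $h$-action is a relabelling of the model torus and is tautological. The adjacent-transposition moves involving only the $b_i$'s are the classical Hurwitz moves for ordinary \lfs and arise from isotopies of the Lefschetz arcs $\eta_i$. The cyclic moves $(a;b_1,\dots,b_\ell)\sim(B_1(a);b_2,\dots,b_\ell,b_1)$ and $(a;b_1,\dots,b_\ell)\sim(B_\ell\inv(a);b_\ell,b_1,\dots,b_{\ell-1})$ correspond geometrically to pushing the arc $\eta_0$ counterclockwise past $\eta_1$, respectively clockwise past $\eta_\ell$; this conjugates the boundary vanishing cycle by the local Lefschetz monodromy (which is $B_1$, respectively $B_\ell\inv$) and cyclically shifts the ordering of the Lefschetz cycles. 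At the Kirby diagram level each cyclic move is a handle slide of the boundary $2$-handle over the Lefschetz handle attached along~$b_1$, followed by a reordering of $2$-handles; neither operation changes the underlying four-manifold.

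The main obstacle is the cyclic move: one must carefully verify that the slide of the boundary $2$-handle (attached with the fibre framing) across a Lefschetz $2$-handle (attached with fibre framing $-1$) produces exactly the change $a\mapsto B_1(a)$ predicted by \autoref{T:Hurwitz equivalence}, with correct framing bookkeeping, and likewise on the other side. Everything else is either formal (independence of handle decompositions under reordering of same-index handles, change of fibre identification) or reduces to the well-established Hurwitz invariance theorem for ordinary \lfs over the disc.
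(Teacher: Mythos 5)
Your proposal is correct and follows essentially the same route as the paper, which reduces the statement to exactly your claims \emph{(i)} and \emph{(ii)} (identifying the total space with the handle construction of \autoref{T:building bLFs}, then checking invariance under the Hurwitz moves) and explicitly leaves the details as ``tedious but straightforward.'' You have in fact supplied more of those details than the paper does, including the correct caveat that the cyclic move requires careful framing bookkeeping when sliding the fibre-framed boundary handle past a Lefschetz handle.
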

\begin{proof}
Elaborating on the proof of \autoref{T:building bLFs}, one can show that, if a Hurwitz system and identification of the reference fibre with~$T^2$ of a \blf $f\colon (X^4,\mathcal{D}^2)\ra (D^2,\del D^2)$ produces the cycle system $(a;b_1,\dots,b_\ell)$, then $X$ is diffeomorphic to the manifold constructed by attaching handles to~$T^2\times D^2$ as explained above.
Similarly, one can then argue that the manifolds constructed from equivalent cycle systems are diffeomorphic.
The details are somewhat tedious but straightforward and we leave them to the inclined reader.
\end{proof}

As a consequence, in order to classify closed four-manifolds admitting \blfs over~$D^2$, it is enough to identify all four-manifolds obtained from abstract cycle systems as in the proof of \autoref{T:building bLFs}.
Moreover, as we argued in \autoref{R:Kirby}, this problem is naturally accessible to the methods of Kirby calculus via the handle decompositions.
For the relevant background about Kirby calculus we refer to~\cite{GompfStipsicz} (Chapter~8, in particular).

\section{Boundary Lefschetz fibrations over \texorpdfstring{$D^2$}{D2}}\label{sec:blfs over D2}

As a warm-up to our main theorem, it is worth considering the following more basic question: Which oriented four-manifolds are boundary fibrations over $D^2$? The answer is very simple:

\begin{lemma}\label{T:no Lefschetz}
Let $X$ be a compact, orientable manifold and let $f\colon (X^4,\mathcal{D}^2) \to (D^2, \del D^2)$ be a boundary fibration. Then $X$ is diffeomorphic to $S^1 \times S^3$ and $\mc D$ is co-orientable.
\end{lemma}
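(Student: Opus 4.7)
The plan is to exploit the cycle system framework developed in the previous sections. Since $f$ is a boundary fibration, it has no Lefschetz critical points, so any Hurwitz system for $f$ reduces to the single arc $\eta_0$ running from the reference point to~$\del D^2$. The associated cycle system therefore has the form $(a;)$ with empty Lefschetz list, and the compatibility condition of \autoref{T:compatibility} specialises to
\[
\mathrm{id} = \pm A^k \in \MCG(T^2),
\]
where $A=\tau_a$, $k$ is the Euler number of the normal bundle of~$\mathcal{D}$ in~$X$, and the sign is positive if and only if~$\mathcal{D}$ is co-orientable.

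The first key step is to use the torus facts from \autoref{R:the torus rules} to rule out the negative sign and force $k=0$. By the Picard--Lefschetz formula \eqref{eq:Picard-Lefschetz}, $A$ fixes~$a$ and shears dual classes by $d\mapsto d+\scp{a,d}\,a$; consequently the image of $A^k$ in $SL(2,\Z)$ is unipotent of trace $2$ for every $k\in\Z$. On the other hand $-\mathrm{id}\in SL(2,\Z)$ has trace~$-2$, so $A^k=-\mathrm{id}$ has no solution, the negative sign is impossible, and then $A^k=\mathrm{id}$ forces $k=0$. This already proves the co-orientability assertion (and shows, as a by-product, that the normal bundle of~$\mathcal{D}$ is trivial).

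To identify the diffeomorphism type of~$X$, I appeal to \autoref{T:recovering bLFs}: the total space depends only on the Hurwitz equivalence class of the cycle system. Since $SL(2,\Z)$ acts transitively on primitive elements of $H_1(T^2)$, any two essential simple closed curves on~$T^2$ are interchanged by an orientation-preserving self-diffeomorphism of~$T^2$, and the last move of \autoref{T:Hurwitz equivalence} then shows that all admissible cycle systems $(a;)$ are mutually Hurwitz equivalent. The explicit boundary fibration $S^1\times S^3\ra D^2$ of \autoref{ex:S1xS3} also has empty Lefschetz list and therefore falls into this single equivalence class; comparing with it yields $X\cong S^1\times S^3$.

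As an independent sanity check (and to avoid invoking the example), I could instead carry out the handle construction of \autoref{R:Kirby} explicitly: $X$ is obtained from $T^2\times D^2$ by attaching a single fibre-framed $2$-handle along a standard meridian in one fibre, followed by two $3$-handles and a $4$-handle. A brief Kirby calculus exercise cancels one of the $1$-handles of $T^2\times D^2$ against this $2$-handle and then cancels the now trivially attached $2$-handle from the $[a,b]$ relation against one of the $3$-handles, leaving precisely the standard handle decomposition of~$S^1\times S^3$. The only place where any honest obstruction could appear is the mapping class group computation, and the trace argument makes that step completely routine; the remainder is bookkeeping.
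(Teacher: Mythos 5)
Your proof is correct. The first half --- extracting the empty cycle system $(a;\,)$, specialising the compatibility condition of \autoref{T:compatibility} to $\mathrm{id}=\pm A^k$, and using the trace of a power of a Dehn twist in $SL(2,\Z)$ to kill the negative sign and force $k=0$ --- is the same argument the paper makes, except that the paper compresses the mapping class group step into the word ``obviously''; your trace computation is a clean way to make that explicit, and it correctly yields both co-orientability and triviality of the normal bundle. Where you diverge is the identification of the diffeomorphism type. The paper proceeds directly: it glues $T^2\times D^2$ to the trivial disc bundle $N_0^+=D^2\times T^2$ along the map of $T^3$ swapping the first two factors and reads off $S^1\times(S^1\times D^2\cup_{\mathrm{id}}D^2\times S^1)\cong S^1\times S^3$ from the genus-one Heegaard splitting of $S^3$. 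You instead invoke \autoref{T:recovering bLFs} together with transitivity of the diffeomorphism group of $T^2$ on essential simple closed curves to conclude that there is only one Hurwitz class of empty cycle systems, and then compare with \autoref{ex:S1xS3}. This is logically sound, but it leans on \autoref{T:recovering bLFs}, whose proof the paper explicitly leaves as a ``tedious but straightforward'' exercise, and on the (asserted but not verified in the paper) fact that the map of \autoref{ex:S1xS3} is a boundary fibration; the paper's explicit gluing is self-contained and costs only two extra lines. Your optional Kirby-calculus check is consistent with the handle decomposition of \autoref{R:Kirby} and would also close the argument without appealing to the example.
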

\begin{proof}
Note that a boundary fibration is a \blf without Lefschetz singularities.
As such, its cycle systems consist of a single curve~$a\subset T^2$ corresponding to the boundary vanishing cycle. 
Thus $a$~is essential and we can therefore assume that~$a = \{1\}\times S^1$.
According to the discussion in \autoref{ch:handle decompositions}, $X$ is obtained from gluing $T^2\times D^2$ together with a suitable disc bundle over a torus or Klein bottle, such that the boundary of a disc fibre is identified with~$a$.
Obviously, the only possibility is $N_0^+ = D^2\times T^2$, the trivial disc bundle over the torus,
and the gluing can be arranged such that $\del D^2\times \{(1,1)\}\subset N_0^+$ is identified with $a\times \{1\}\subset T^2\times D^2$.
Since this is achieved by the diffeomorphism of $T^3$ which flips the first two factors, we see that
	\begin{align*}
	X 
	&\cong S^1\times S^1\times D^2 \cup_\varphi D^2\times S^1\times S^1 \\
	&\cong S^1\times S^1\times D^2 \cup_{\mathrm{id}} S^1\times D^2\times S^1\\
	&\cong S^1\times \big(S^1\times D^2 \cup_{\mathrm{id}} D^2\times S^1\big)
	\cong S^1\times S^3,
	\end{align*}
where the last diffeomorphism comes from the standard decomposition of~$S^3$ considered as sitting in~$\C^2$ and split into two solid tori by $S^1\times S^1\subset\C^2$.
\end{proof}

Now we move on to study honest boundary Lefschetz fibrations over the disc and eventually prove our classification theorem, \autoref{theo:blf over D2}. The proof of the theorem itself is done by induction on the number of singular fibres. So, in order to achieve our aim, we need to study the base cases, i.e., boundary Lefschetz fibrations with only a few singular fibres, and explain how to systematically reduce the number of singular fibres to bring us back to the base cases. It turns out that there is a step that appears frequently, namely, the blow-down of certain $(-1)$-spheres which is interesting on its own as it gives the notion of a relatively minimal boundary Lefschetz fibration. In the rest of this section, we will first study blow-downs and relatively minimal fibrations. We then move on to study the cases with one and two singular fibres and finally prove \autoref{theo:blf over D2}.

\subsection{The blow-down process and relative minimality}

Given a usual Lefschetz fibration $f\colon X^4 \to \Sigma^2$, we can perform the blow-up in a regular point~$x\in X$ with respect to a local complex structure compatible with the orientation of~$X$.
The result is a manifold $\wt X$ together with a blow-down map~$\sigma\colon \wt X\ra X$ and it turns out that the composition $\wt f =f\circ\sigma\colon \wt X\ra \Sigma$ is a Lefschetz fibration with one more critical point than~$f$ in the fibre over~$y=f(x)$.
Moreover, the exceptional divisor sits inside the (singular) fibre $\wt f\inv(y)$ as a sphere with self-intersection~$-1$.
Conversely, given any $(-1)$-sphere in a singular fibre of a Lefschetz fibration this process can be reversed: the $(-1)$-sphere can be blown down producing a Lefschetz fibration with one critical point less.
For that reason it is enough to study \emph{relatively minimal} Lefschetz fibrations: fibrations whose fibres do not contain any $(-1)$-spheres.
Equivalently, a Lefschetz fibration is relatively minimal if no vanishing cycle bounds a disc in the reference fibre; 
and on the level of cycle systems the blow-up and blow-down procedures simply amount to adding or removing null-homotopic vanishing cycles.

For a boundary Lefschetz fibration $f\colon(X^4,\mathcal{D}^2) \to (\Sigma^2,Z^1)$ there is another way a $(-1)$-sphere can occur in relation to the fibration. 
These spheres arise if there is a simple path connecting a Lefschetz singular value of~$f$ to a component of~$Z$ with the property that the Lefschetz vanishing cycle in one end of the path agrees with the boundary vanishing cycle. In this case, we can form the corresponding Lefschetz thimble from the Lefschetz singularity which then closes up at the other end of the path to give rise to a $(-1)$-sphere, $E$, which intersects the divisor $\mathcal{D}$ at one point, as observed in \cite{MR2574746}.
Observe that, in the case where $(\Sigma,Z)=(D^2,\del D^2)$ this is equivalent to a cycle system~$(a;b_1,\dots,b_\ell)$ such that some~$b_i$ agrees with~$a$.
From this description, it is clear that we can blow $E$ down to obtain a new manifold, $X'$. What is not immediately clear is that $X'$ admits the structure of a boundary Lefschetz fibration. 

\begin{proposition}\label{T:blow-down}
Let $f\colon (X^4,\mc D^2)\ra (D^2,\del D^2)$ be a \blf. 
If $f$ has a cycle system~$(a;b_1,\dots,b_\ell)$ such that $b_i=a$ for some~$i$, then there exists another \blf $f'\colon (X',\mc D')\ra (D^2,\del D^2)$ with cycle system~$\big(a;A(b_1),\dots,A(b_{i-1}),b_{i+1},b_\ell\big)$, where $A$ denotes a Dehn Twist about $a$.
Moreover, we have $X\cong X'\#\overline{\C P^2}$ and~$\mc D'$ has the same co-orientability  as $\mc D$.
\end{proposition}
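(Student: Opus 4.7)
The plan is to first normalize the cycle system via Hurwitz equivalence so that the offending Lefschetz vanishing cycle equal to $a$ sits in the first Lefschetz position, then to exhibit an explicit $(-1)$-sphere meeting $\mc D$ transversally in a single point, and finally to blow this sphere down while tracking both the change of cycle system and the effect on the diffeomorphism type of $X$.

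For the normalization, starting from $(a;b_1,\dots,b_\ell)$ with $b_i=a$, I would apply the Hurwitz move $(\dots,b_j,b_{j+1},\dots)\sim(\dots,b_{j+1},B_{j+1}(b_j),\dots)$ from \autoref{T:Hurwitz equivalence} at adjacent positions $(i-1,i),(i-2,i-1),\dots,(1,2)$. Each step slides the copy of $a$ one place to the left while twisting the cycle it passes by $A=\tau_a$. After $i-1$ applications one reaches the equivalent cycle system
\begin{equation*}
(a;\, a,\, A(b_1),\, A(b_2),\,\dots,\, A(b_{i-1}),\, b_{i+1},\,\dots,\, b_\ell),
\end{equation*}
and by \autoref{T:recovering bLFs} we may assume that $f$ itself realizes this normalized system.

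In this form the first Lefschetz vanishing cycle coincides with the boundary vanishing cycle. Following \cite{MR2574746}, concatenating the Lefschetz thimble over $\eta_1$ (which fills in $a$ as one approaches the first critical value) with the meridional disc of the solid torus $f\inv(\eta_0)\cong D^2\times S^1$ (which caps off $a$ from the other side) produces an embedded sphere $E\subset X$ with $[E]\cdot[E]=-1$ and $E\cdot\mc D=1$; the framings can be read off either from the local model or from the Kirby diagram of \autoref{R:Kirby}, where the two $2$-handles along parallel copies of $a$ carry fibre framing and fibre framing~$-1$, respectively.

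The last step is to blow $E$ down. The cleanest way is to build the candidate $f'\colon X'\ra D^2$ directly from the abstract cycle system $(a;\,A(b_1),\dots,A(b_{i-1}),\,b_{i+1},\dots,b_\ell)$ using \autoref{T:building bLFs}; this is a legitimate abstract cycle system because conjugating the original compatibility relation $B_\ell\circ\dots\circ B_1=\pm A^k$ by $A$ and using $B_i=A$ gives
\begin{equation*}
B_\ell\circ\dots\circ B_{i+1}\circ(AB_{i-1}A\inv)\circ\dots\circ(AB_1A\inv)=\pm A^{k-1},
\end{equation*}
with the sign unchanged, so \autoref{T:compatibility} guarantees that $\mc D'$ has the same co-orientability as $\mc D$ and that its normal bundle has Euler number $k-1$. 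What remains is the identification $X\cong X'\#\CPbar$, and this is the step I expect to be the main obstacle. I would handle it by comparing the Kirby diagrams of \autoref{R:Kirby} for $X$ and $X'$, noting that they differ only by an extra $(-1)$-framed $2$-handle attached along a copy of $a$ parallel to the fibre-framed $2$-handle coming from the $N_{-k}^\pm$ piece, and then running a handle-cancellation argument in the spirit of the standard blow-down move from Kirby calculus (cf.\ \cite{GompfStipsicz}, Chapter~5) to isolate this extra handle as a $\CPbar$ summand, while the drop of the Euler number by one is precisely accounted for by the lost intersection $E\cdot\mc D=1$.
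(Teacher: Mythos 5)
Your proposal follows essentially the same route as the paper: normalize by Hurwitz moves so that the copy of $a$ becomes the first Lefschetz vanishing cycle (yielding exactly the system $(a;a,A(b_1),\dots,A(b_{i-1}),b_{i+1},\dots,b_\ell)$), then compare Kirby diagrams and slide the $(-1)$-framed Lefschetz handle over the $0$-framed boundary-vanishing-cycle handle to split off an unlinked $-1$-framed unknot, i.e.\ a $\CPbar$ summand. The paper leaves the co-orientability claim implicit, so your explicit check that the product of twists becomes $\pm A^{k-1}$ with unchanged sign is a welcome addition rather than a deviation.
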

\begin{proof}
This is our first exercise in Kirby calculus.
Using Hurwitz moves we have the equivalence of cycle systems:
	\begin{equation*}
	\big (a;b_1,\dots,b_{i-1},a,b_{i+1},\dots,b_\ell \big)
	\cong \big(a;a, A(b_1),\dots,A(b_{i-1}),b_{i+1},b_\ell\big),
	\end{equation*}
so we may assume without loss of generality that $b_1 =a$. Further, we can take $a$ to be the first cycle of a dual pair $\{a,b\}$, that is, we may assume that~$a=S^1\times\{1\}\subset T^2$.
We now compare the Kirby diagrams obtained from the cycle systems $(a;a,b_2,\dots,b_\ell)$ and~$(a;b_2,\dots,b_\ell)$.

As we mentioned in \autoref{R:Kirby}, to draw a Kirby diagram for a boundary Lefschetz fibration corresponding to a cycle system, we start with the Kirby diagram of $D^2 \times T^2$ and add cells corresponding to the boundary vanishing cycle followed by the Lefschetz vanishing cycles ordered counterclockwise. Therefore, the Kirby diagram for $(a;a,b_2,\dots,b_\ell)$ is the Kirby diagram for $(a;a)$ with a number of $2$-handles on top of it representing the  cycles $b_2,\dots,b_\ell$.  The Kirby move we use next does not interact with these last $(l-1)$ $2$-handles, therefore we will not represent them in the diagram. With this in mind, the relevant part of the Kirby diagram of $(a;a,b_2,\dots,b_\ell)$ is the Kirby diagram of $(a;a)$ as drawn in \autoref{fig:aa}.(a). Sliding the $-1$-framed $2$-handle corresponding to the first Lefschetz singularity over the $0$-framed $2$-handle corresponding the boundary vanishing cycle produces a $-1$-framed unknot which is unlinked from the rest (see \autoref{fig:blowdown}).
The remaining Kirby diagram is precisely that corresponding to the cycle system~$(a;b_2,\dots,b_\ell)$.
Since an isolated $-1$-framed unknot represents a connected sum with~$\overline{\C P^2}$, the result follows. \qedhere
\begin{figure}[h!!]
\begin{center}
\includegraphics[height=4.7cm]{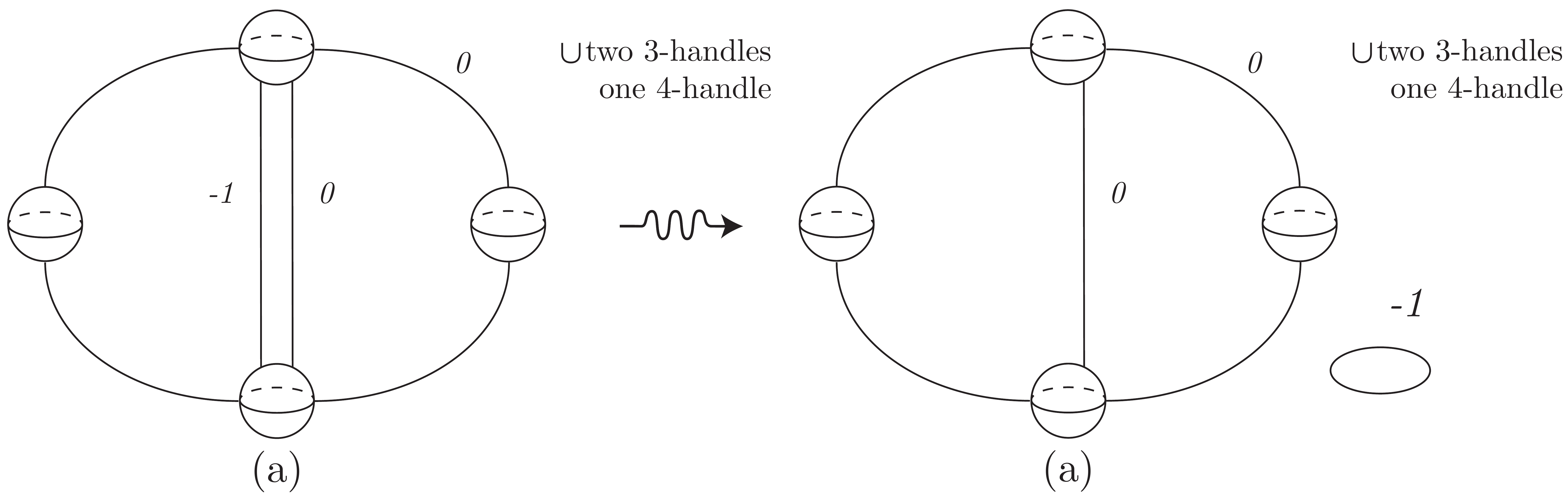} \caption{Figure showing the relevant part of the Kirby diagram of the cycle system $(a;a,b_2,\dots, b_\ell)$ and the result of sliding the $-1$-framed $2$-handle over the $0$-framed one.} \label{fig:blowdown}
\end{center}
\end{figure}
\end{proof}

The previous proof is prototypical for much of what follows from now on.
In light of \autoref{T:blow-down} we make the following definition.

\begin{definition}[Relative minimality]\label{D:relatively minimal}
A \blf $f\colon (X^4,\mc D^2)\ra (D^2,\del D^2)$ is called \emph{relatively minimal} if there is no cycle system $(a;b_1,\dots,b_\ell)$ for~$f$ in which some Lefschetz vanishing cycle~$b_i$ is either null-homotopic or parallel to~$a$.
\end{definition}

\subsection{Boundary Lefschetz fibrations over \texorpdfstring{$D^2$}{D2} with few singular fibres}

The next step is to determine which manifolds admit boundary Lefschetz fibrations with only one or two singular Lefschetz fibres.

\begin{lemma}\label{T:one Lefschetz}
Let $f\colon(X^4,\mathcal{D}^2) \to (D^2, \del D^2)$ be a boundary Lefschetz fibration with a single singular Lefschetz fibre. Then $f$ is not relatively minimal, we have $X \cong (S^1 \times S^3)\# \overline{\C P^2}$, and $\mc D$ is co-orientable.
\end{lemma}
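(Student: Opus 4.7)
The plan is to analyse the cycle system $(a;b_1)$ directly using the compatibility condition of \autoref{T:compatibility}, which reads $\tau_{b_1}=\pm \tau_a^k$ in $\MCG(T^2)$, and then reduce to \autoref{T:no Lefschetz} via one of the two blow-down procedures.

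First I would dispose of the case where $b_1$ is null-homotopic. Then $\tau_{b_1}=1$, so the compatibility forces $\tau_a^k = \pm 1$. Since $\tau_a$ has infinite order in $\MCG(T^2)$ and its matrix in $SL(2,\Z)$ (w.r.t.\ any dual pair) is a parabolic element, it never equals $-I$; hence $k=0$ and the sign is positive, so $\mc D$ is co-orientable. In this case $f$ is not relatively minimal by \autoref{D:relatively minimal}, and the standard Lefschetz blow-down removes the null-homotopic cycle to produce a boundary fibration over $D^2$, which by \autoref{T:no Lefschetz} has total space $S^1\times S^3$. Reversing the blow-up gives $X\cong (S^1\times S^3)\#\ol{\C P^2}$.

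Next I would handle the case where $b_1$ is essential. Applying both sides of $\tau_{b_1}=\pm \tau_a^k$ to $a$ and using the Picard--Lefschetz formula \eqref{eq:Picard-Lefschetz} gives
\begin{equation*}
a + \scp{b_1,a}\, b_1 \;=\; \pm a \;\in\; H_1(T^2).
\end{equation*}
In the $+$ case, $\scp{b_1,a}=0$, and since both curves are primitive this forces $b_1$ to be parallel to $a$; so $f$ fails to be relatively minimal, and \autoref{T:blow-down} converts $(a;b_1)$ into a cycle system $(a)$ with no Lefschetz cycles while taking $X$ to $X'\#\ol{\C P^2}$ with $X'\cong S^1\times S^3$ by \autoref{T:no Lefschetz}; the co-orientability of $\mc D$ is preserved by \autoref{T:blow-down}. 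In the $-$ case we would obtain $\scp{b_1,a}\,b_1=-2a$ in $H_1(T^2)$, but the left-hand side is a multiple of $b_1$ and the right-hand side is a non-zero multiple of $a$, so this is impossible unless $a$ and $b_1$ are parallel, in which case the coefficient $\scp{b_1,a}$ vanishes, a contradiction. Thus the $-$ case does not occur and $\mc D$ is co-orientable in all situations.

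The only mild obstacle is the bookkeeping in the essential case: one must remember that simple closed curves on the torus are primitive homology classes up to sign (\autoref{R:the torus rules}), so that the equation in $H_1(T^2)$ genuinely forces $b_1=\pm a$ in the $+$ case and yields a contradiction in the $-$ case, rather than only constraining things up to higher multiples. Once this algebraic step is in place, the geometric conclusions follow immediately from \autoref{T:blow-down} and \autoref{T:no Lefschetz}.
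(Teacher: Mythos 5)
Your proposal is correct and follows essentially the same route as the paper: the paper also notes that a cycle system $(a;b_1)$ with $B_1=\pm A^k$ forces $b_1$ to be null-homotopic or parallel to $a$ (which it leaves as ``clearly''), and then blows down and invokes \autoref{T:no Lefschetz}. Your only addition is to spell out the Picard--Lefschetz computation behind that ``clearly'' and the exclusion of the negative sign, which is a harmless elaboration rather than a different argument.
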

\begin{proof}
This is \cite[Example 8.4]{2017arXiv170303798C}, but in light of our discussion about blow-ups in terms of cycle systems we can determine it directly. 
Indeed, any cycle system of~$f$ has the form~$(a;b_1)$ such that $B_1=\pm A^k$.
Clearly this is only possible when~$b_1$ is either null-homotopic or parallel to~$a$.
In either case, $f$ is not relatively minimal and can be blown down to a boundary fibration, which, by \autoref{T:no Lefschetz}, is diffeomorphic to $S^1 \times S^3$.
\end{proof}


\begin{lemma}\label{T:two Lefschetz}
Let $f\colon(X^4,\mathcal{D}^2) \to (D^2, \del D^2)$ be a relatively minimal boundary Lefschetz fibration with two singular Lefschetz fibres. 
Then $X \cong S^4$ and $\mc D$ is not co-orientable.
\end{lemma}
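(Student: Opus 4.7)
My plan is to analyze the cycle system $(a;b_1,b_2)$ algebraically in $SL(2,\Z)$, extract the monodromy to detect co-orientability, and then identify the total space via Kirby calculus.

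I would first fix a dual pair $\{a,b\}$ and use it to identify $\MCG(T^2)\cong SL(2,\Z)$ with $a=(1,0)$ and $A$ upper-triangular unipotent. Writing $b_i=(p_i,q_i)$ primitively, relative minimality forces each $b_i$ to be essential and non-parallel to $a$, hence $q_i\neq 0$. The compatibility condition $B_2B_1=\pm A^k$ requires the $(2,1)$-entry of $B_2B_1$ to vanish, which expands to $q_1^2+q_2^2 = q_1q_2\scp{b_1,b_2}$. Dividing by $q_1q_2$, the integrality of $q_1/q_2+q_2/q_1$ forces $|q_1|=|q_2|$; primitivity then excludes $|q_i|=2$ on parity grounds (both $p_i$ would be odd, making $\scp{b_1,b_2}$ even and inconsistent with the equation), so $q_1,q_2\in\{\pm 1\}$ and both $b_i$ are dual to $a$. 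By \autoref{R:the torus rules}(2), $b_2=b_1+ma$ for some $m\in\Z$, and substituting back pins $|m|=2$. After a final Hurwitz move $(a;b_1,b_2)\sim(h(a);h(b_1),h(b_2))$ with $h$ an appropriate power of $\tau_a$, we normalize the cycle system to $(a;\,b+a,\,b-a)$.

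A direct matrix computation (or the Picard--Lefschetz formula) then gives
\begin{equation*}
B_2B_1 \;=\; \tau_{b-a}\circ\tau_{b+a} \;=\; -A^{-4}\in SL(2,\Z).
\end{equation*}
By \autoref{T:compatibility}, the negative sign shows that $\mc D$ is not co-orientable (it is a Klein bottle) and the exponent $-4$ identifies the Euler number of the normal bundle of $\mc D$.

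To identify $X$ itself, I would invoke the handle description from \autoref{R:Kirby}: $X$ arises from $T^2\times D^2$ by attaching, in order, a $0$-framed $2$-handle along $a$, $(-1)$-framed $2$-handles along $b+a$ and $b-a$, and finally two $3$-handles and a $4$-handle. I would simplify this Kirby diagram by sliding each Lefschetz $2$-handle over the $a$-handle to remove its strands through the ``$x$'' $1$-handle (the slides preserve the $-1$ framings because the $a$-handle has framing $0$), and then cancel the $a$-handle against that $1$-handle. A parallel cleanup disposes of the torus-commutator $2$-handle, leaving two parallel $(-1)$-framed meridians of the ``$y$'' $1$-handle which cancel against the $3$-handles, and the resulting empty diagram represents $S^4$. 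The main obstacle is carefully tracking framings and linkings through this sequence of slides; as a consistency check, the $a$- and $(b+a)$-handles together kill both generators of $\pi_1(T^2\times D^2)$ so $X$ is simply connected, and the equality $\chi(X)=\mu=2$ matches $\chi(S^4)$.
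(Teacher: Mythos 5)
Your overall strategy---normalise the cycle system $(a;b_1,b_2)$, read off co-orientability from the sign in $B_2B_1=\pm A^k$, and identify $X$ by Kirby calculus on the handle decomposition of \autoref{R:Kirby}---is exactly the paper's. The one genuinely different ingredient is that you derive the normal form yourself: the paper simply quotes Hayano's computation that $b_1=A^2(b_2)$, whereas your $SL(2,\Z)$ argument (vanishing of the $(2,1)$-entry giving $q_1^2+q_2^2=q_1q_2\scp{b_1,b_2}$, hence $|q_1|=|q_2|$ and $\scp{b_1,b_2}=\pm2$, hence $q_i\mid 2$, with $|q_i|=2$ excluded by parity) is correct and self-contained; this is a nice gain in readability. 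One small imprecision there: with $|q_i|=2$ and both $p_i$ odd, the obstruction is that $\scp{b_1,b_2}$ is divisible by $4$ while the equation forces it to be $\pm2$ --- ``even'' is not by itself a contradiction, since $\pm 2$ is even. Your normalisation $(a;b+a,b-a)$ is Hurwitz equivalent to the paper's $(a;b+2a,b)$ (apply $A$), and the matrix computation $\tau_{b-a}\tau_{b+a}=-A^{-4}$ agrees with the paper's braid-relation computation, so the co-orientability claim is fully established.

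The soft spot is the Kirby-calculus endgame. The paper's route (via \autoref{lem:basickirby} with $k=2$ and \autoref{fig:S4b}) ends with: one $2$-handle pushed off the $1$-handles becomes a $0$-framed unknot cancelling a $3$-handle, the $a$- and $b$-handles each cancel a $1$-handle, and the last $0$-framed unknot cancels the remaining $3$-handle. Your description instead ends with ``two parallel $(-1)$-framed meridians of the $y$ $1$-handle which cancel against the $3$-handles''; as stated this is not a legal cancellation --- a $2$-handle running geometrically once over a $1$-handle cancels that $1$-handle, while a $2$-handle cancels a $3$-handle only once it has been isolated as a $0$-framed unknot, and two $2$-handles cannot both play both roles against a single $1$-handle. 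Likewise, ``the slides preserve the $-1$ framings because the $a$-handle has framing $0$'' is too quick: the framing after a slide is $n_1+n_2\pm2\,\mathrm{lk}$, so you must also track the linking numbers. None of this is fatal --- the correct sequence of moves exists and is carried out in the paper --- but this portion of your argument needs to be reworked before it constitutes a proof that $X\cong S^4$; your $\pi_1$ and $\chi$ checks only show $X$ is a homotopy $4$-sphere, which does not by itself give the diffeomorphism.
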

\begin{proof}
All cycle systems of~$f$ have the form~$(a;b_1,b_2)$ with~$b_1$ and~$b_2$ essential and not parallel to~$a$.
At this level of difficulty one can still perform direct computations. This was done by Hayano in \cite{MR2801419}. The outcome is that we must have $b_1=A^2(b_2)=b_2+2a$ and for suitable orientations we have $\langle a,b_1\rangle=\langle s,b_2\rangle=1$. 
Using the relation $AB_2A=B_2AB_2$ and $(AB_2)^3=-1$ in~$\MCG(T^2)$ we find that
	\begin{equation*}
	\mu(f) 
	= B_2B_1
	= B_2 A^{2}B_2A^{-2}
	=B_2 A (A B_2A )A A^{-4}
	=B_2A (B_2A B_2)A A^{-4}
	= -A^{-4}.
	\end{equation*}
The corresponding Kirby diagram for~$X$ is given in \autoref{fig:aa}.(b).

This particular type of Kirby diagram will appear repeatedly in this paper, so we deal with it in a separate claim.

Just as we mentioned in the proof of \autoref{T:blow-down}, when drawing the Kirby diagram for a boundary Lefschetz fibration we must draw, from bottom to top, a $0$-framed $2$-handle corresponding to the boundary vanishing cycle and then $-1$-framed $2$-handles for each Lefschetz singularity ordered counterclockwise. We will often want to make simplifications to the diagram which involve only the bottom two or three $2$-handles.

\begin{lemma}\label{lem:basickirby}
Let $a,b\subset T^2$ be a dual pair of curves. Then the Kirby diagram associated to a cycle system of the form $(a;A^k(b),b,\dots)=(a;b+ka,b,\dots)$ is equivalent to that in \autoref{fig:basickirby}. 
\begin{figure}[h!!]
\begin{center}
\includegraphics[height=4.7cm]{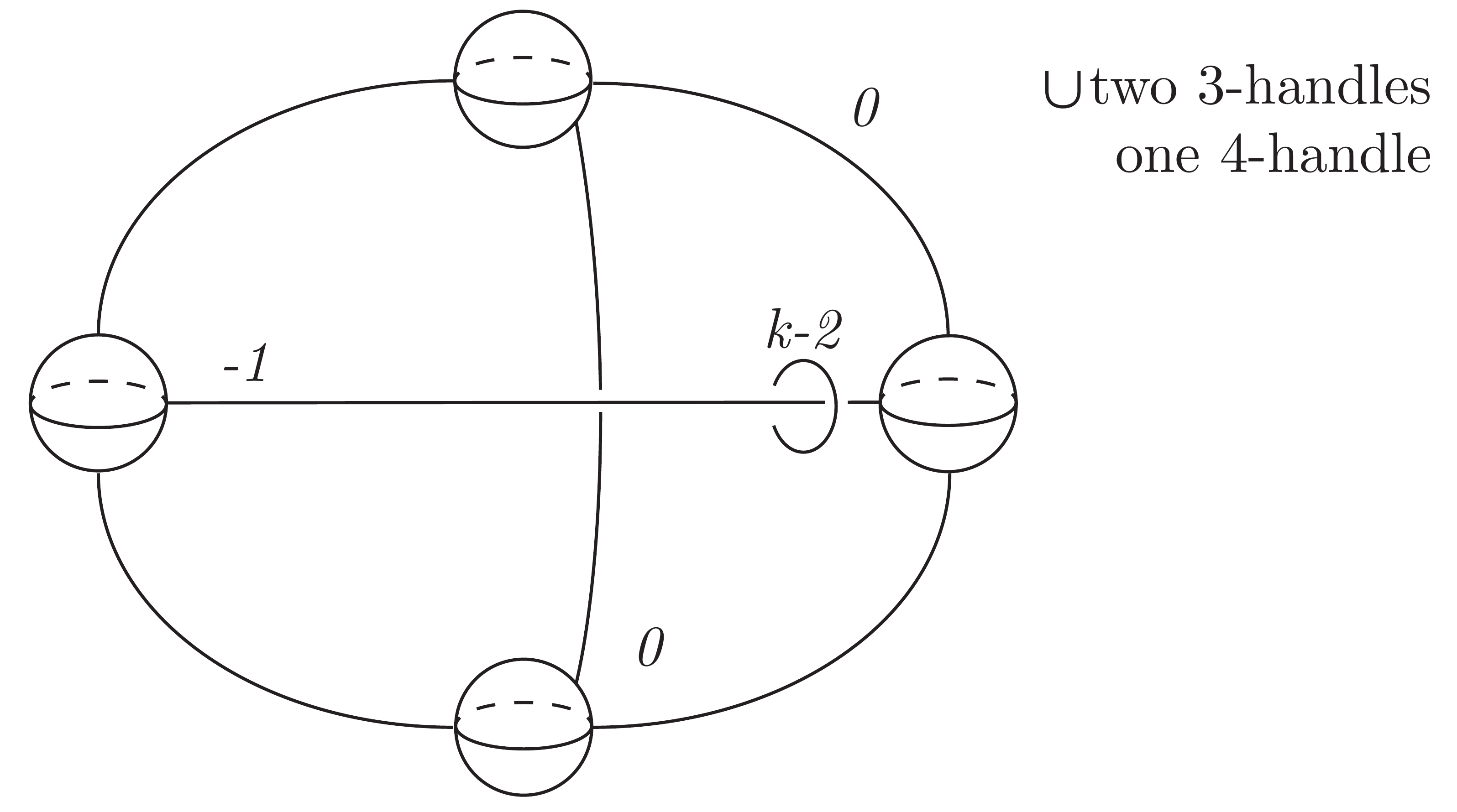} 
\caption{A Kirby diagram for cycle systems $(a;b+ka,b,\dots)$ after handle slides.
Only the first three $2$-handles are shown, the other handles appear above the diagram in their standard form. In particular, they are unlinked from the $(k-2)$-framed unknot.} \label{fig:basickirby}
\end{center}
\end{figure}
\end{lemma}
\begin{proof}
The proof is a simple exercise: slide the $2$-handle corresponding to $b+ka$ $k$ times over the $0$-framed $2$-handle representing $a$ and once over the $2$-handle corresponding to $b$. None of these manoeuvres interacts with the other handles. 
\end{proof}

\noindent
{\it Proof of \autoref{T:two Lefschetz} continued.} Using \autoref{lem:basickirby} we see that the boundary Lefschetz fibration is equivalent to the one depicted in \autoref{fig:basickirby} with $k=2$. If we slide the outer $2$-handle over the `$a$-handle' twice we get the diagram depicted in \autoref{fig:S4b}. There, a few things happen: the outer $0$-framed $2$-handle can be pushed out of the $1$-handle and cancels a $3$-handle. The `$a$-handle' cancels one of the $1$-handles, and the `$b$-handle' cancels the other so we are left with a $0$-framed unknot which cancels the remaining $3$-handle. After all this cancellation we are left only with the $0$-handle and the $4$-handle, hence $X$ is $S^4$.
\begin{figure}[h!!]
\begin{center}
\includegraphics[height=4.7cm]{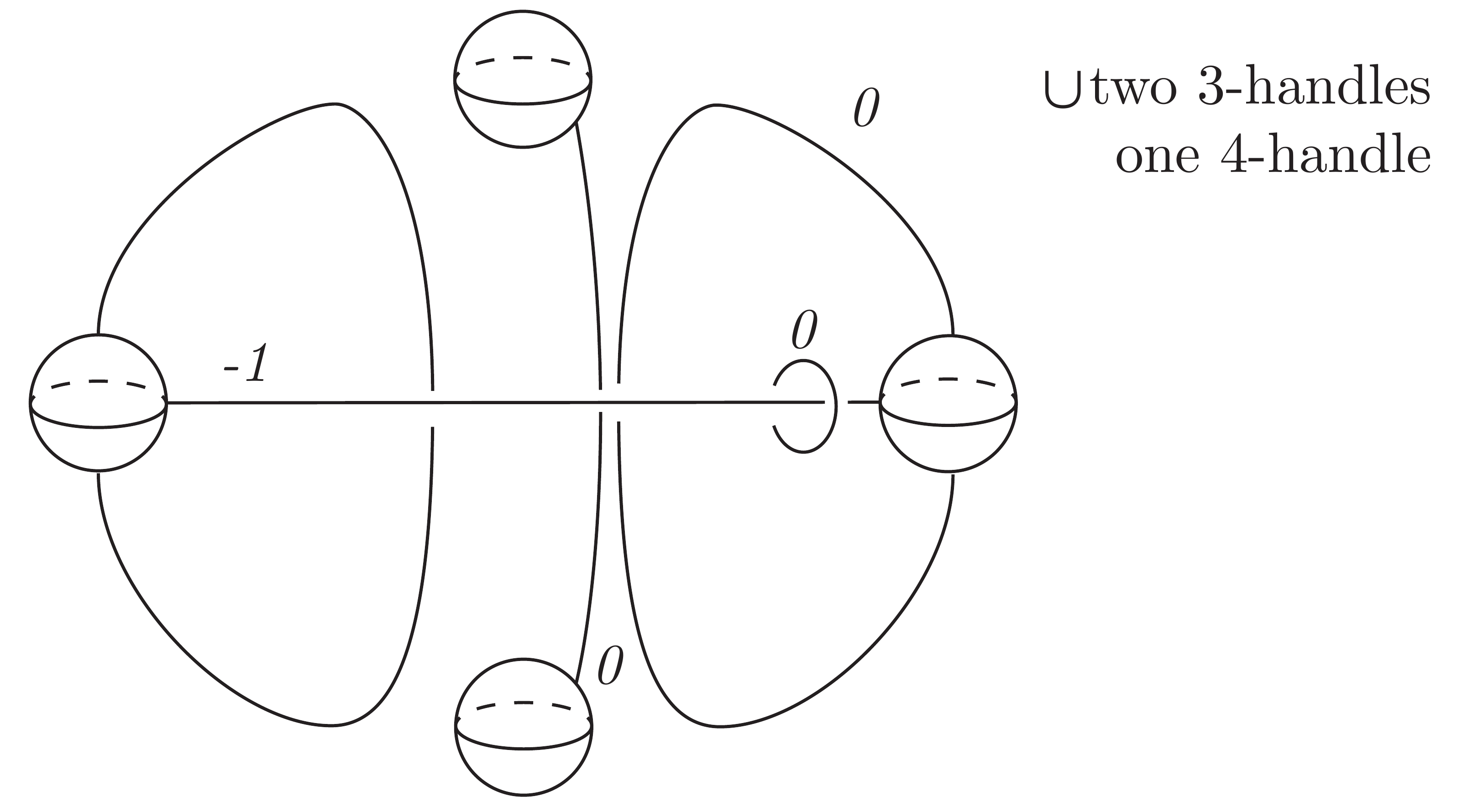} \caption{Kirby diagram for $X$ after two handle slides. Now everything cancels.} \label{fig:S4b}
\end{center}
\end{figure}
\end{proof}

\subsection{The inductive step}

The key for the induction are structural results about cycle systems of \blfs that were obtained by Hayano~\cite{MR2801419,MR3251824}, albeit in the slightly different but closely related context of genus-one simplified broken Lefschetz fibrations.
In what follows, $a,b\subset T^2$ is a fixed dual pair of curves, and $A,B\in\MCG(T^2)$ are the corresponding Dehn twists.

\begin{theorem}[Hayano Factorisation Theorem]\label{T:Hayano factorization}
Any abstract cycle system $(a;b_1,\dots,b_\ell)$ in the sense of \autoref{D:abstract cycle systems} is Hurwitz equivalent to one of the form
	\begin{equation}\label{eq:Hayano factorization}
	\big(a; \, a,\dots,a, \, b+k_1a,\dots,b+k_ra\big).
	\end{equation}
Moreover, for some~$1\leq i<r$ we must have $k_i-k_{i+1}\in\{1,2,3\}$.
\end{theorem}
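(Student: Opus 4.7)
The plan is to prove the theorem in two stages: first establishing that the factorization exists in the form \eqref{eq:Hayano factorization}, and then extracting the combinatorial constraint on the $k_j$'s. Both stages rely on the perks of working on $T^2$ noted in \autoref{R:the torus rules}, namely that essential simple closed curves correspond to primitive homology classes and Dehn twists act via the Picard--Lefschetz formula \eqref{eq:Picard-Lefschetz}.

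For the first stage, I would first observe that each $b_i$ is either parallel to~$a$ (hence equal to~$a$ as an unoriented curve) or satisfies $\scp{a, b_i} \neq 0$. Using the Hurwitz move $(b_{i-1}, a) \sim (a, A(b_{i-1}))$ from \autoref{T:Hurwitz equivalence}, every $a$-cycle can be migrated to the front of the list; since $A$ fixes the class of $a$, cycles not parallel to~$a$ remain so under these modifications. This reduces the system to one of the form $(a; a, \ldots, a, c_1, \ldots, c_r)$ in which each $c_j$ is primitive and non-parallel to $a$. Writing $c_j = p_j a + q_j b$ with $\gcd(p_j, q_j) = 1$ and $q_j \neq 0$, the remaining task is to reduce each $|q_j|$ to one. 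I would argue this by induction on $\sum_j |q_j|$: the Picard--Lefschetz formula shows that a Hurwitz move on $(c_j, c_{j+1})$ replaces $c_j$ by $c_j + \scp{c_{j+1}, c_j} c_{j+1}$, which in favourable configurations decreases the total complexity in a Euclidean-algorithm-like manner. The compatibility condition \eqref{eq:compatibility condition} is invoked to rule out the apparent ``non-decreasing'' configurations.

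For the second stage, once we are in the form \eqref{eq:Hayano factorization}, I would use the identity $\tau_{b + k a} = A^k B A^{-k}$ to rewrite the monodromy as
\begin{equation*}
A^{k_r} B A^{k_{r-1} - k_r} B A^{k_{r-2} - k_{r-1}} B \cdots B A^{k_1 - k_2} B A^{s - k_1} = \pm A^k,
\end{equation*}
where $s$ is the number of leading $a$-cycles. Supposing every exponent $k_i - k_{i+1}$ between two $B$'s were to lie outside $\{1,2,3\}$, the word on the left cannot reduce to a pure power of $A$: the values $1$, $2$, $3$ are precisely the gaps that trigger simplifications via the braid relation $BAB = ABA$ (gap~$1$) or via the relation $(AB)^6 = 1$ (gaps~$2$ and~$3$), whereas any gap $\leq 0$ or $\geq 4$ contributes a genuine $B$-factor to the reduced word which cannot be absorbed into a power of $A$.

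The main obstacle is the first-stage reduction: although locally one can often decrease $\sum_j |q_j|$, proving termination of the induction globally requires ruling out ``stuck'' configurations in which no single Hurwitz move reduces the complexity. This is exactly the technical heart of Hayano's work~\cite{MR2801419, MR3251824}, where the equivalent factorisation problem in $\MCG(T^2)$ was analysed in the context of simplified broken Lefschetz fibrations; the argument transports to the present setting once the compatibility condition \eqref{eq:compatibility condition} is in place. The second stage, by contrast, is essentially careful bookkeeping with the normal form of words in $SL(2,\Z)$.
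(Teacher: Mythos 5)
The first thing to say is that the paper does not actually prove this theorem: its ``proof'' is a one-line citation of \cite[Theorem~3.11]{MR2801419} and \cite[Lemma~4]{MR3251824}, so the comparison is really between your sketch and Hayano's original arguments. With that caveat, your outline identifies the two halves of the problem correctly, and your assessment of the first half is fair: migrating the $a$-parallel cycles to the front via $(b_{i-1},a)\sim(a,A(b_{i-1}))$ works (and $\scp{a,A(c)}=\scp{a,c}$ keeps the remaining cycles non-parallel to~$a$), and you are right that termination of the Euclidean-type reduction of $\sum_j|q_j|$ --- ruling out ``stuck'' configurations --- is the technical heart of \cite{MR2801419}; you explicitly defer to it, which is no worse than what the paper does. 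One inaccuracy: an abstract cycle system in the sense of \autoref{D:abstract cycle systems} may contain null-homotopic $b_i$ (their Dehn twists are trivial, so \eqref{eq:compatibility condition} is unaffected), and such curves can be neither discarded nor converted into $a$-parallel curves by Hurwitz moves. The statement implicitly assumes all vanishing cycles are essential, and your opening dichotomy (``parallel to $a$ or $\scp{a,b_i}\neq 0$'') should flag this hypothesis rather than assert it.

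The genuine gap is in the second stage, which you dismiss as ``careful bookkeeping''. The assertion that the word $A^{k_r}BA^{k_{r-1}-k_r}B\cdots BA^{k_1-k_2}BA^{s-k_1}$ cannot equal $\pm A^k$ when every internal exponent lies outside $\{1,2,3\}$ is precisely \cite[Lemma~4]{MR3251824}, and it is a theorem, not a normal-form computation. What is true and easy is that $BA$, $BA^2$ and $BA^3$ are elliptic, of orders $6$, $4$ and $3$ in $PSL(2,\Z)$ (e.g.\ $(BA^2)^2=-1$ and $(BA^3)^3=1$), which explains why gaps $1$, $2$, $3$ permit cancellation. But the converse --- that a product of the remaining elements $BA^m$ with $m\le 0$ or $m\ge 4$, which are parabolic or hyperbolic up to sign, conjugated by powers of $A$, can never equal $\pm A^k$ --- requires an actual argument; Hayano runs a positivity and growth analysis of matrix entries, and a ping-pong argument on the Farey tessellation would also do. ``Contributes a genuine $B$-factor which cannot be absorbed'' is the conclusion restated, not a proof: products of hyperbolic elements of $SL(2,\Z)$ can in general be parabolic or elliptic, so something specific to this family of words must be exploited. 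If you want a self-contained proof you must supply this lemma; otherwise cite it, as the paper does.
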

\begin{proof}[Proof (by reference)]
This is a combination of \cite[Theorem~3.11]{MR2801419} and \cite[Lemma~4]{MR3251824}.
\end{proof}
\begin{figure}
\begin{center}
\includegraphics[height=4.7cm]{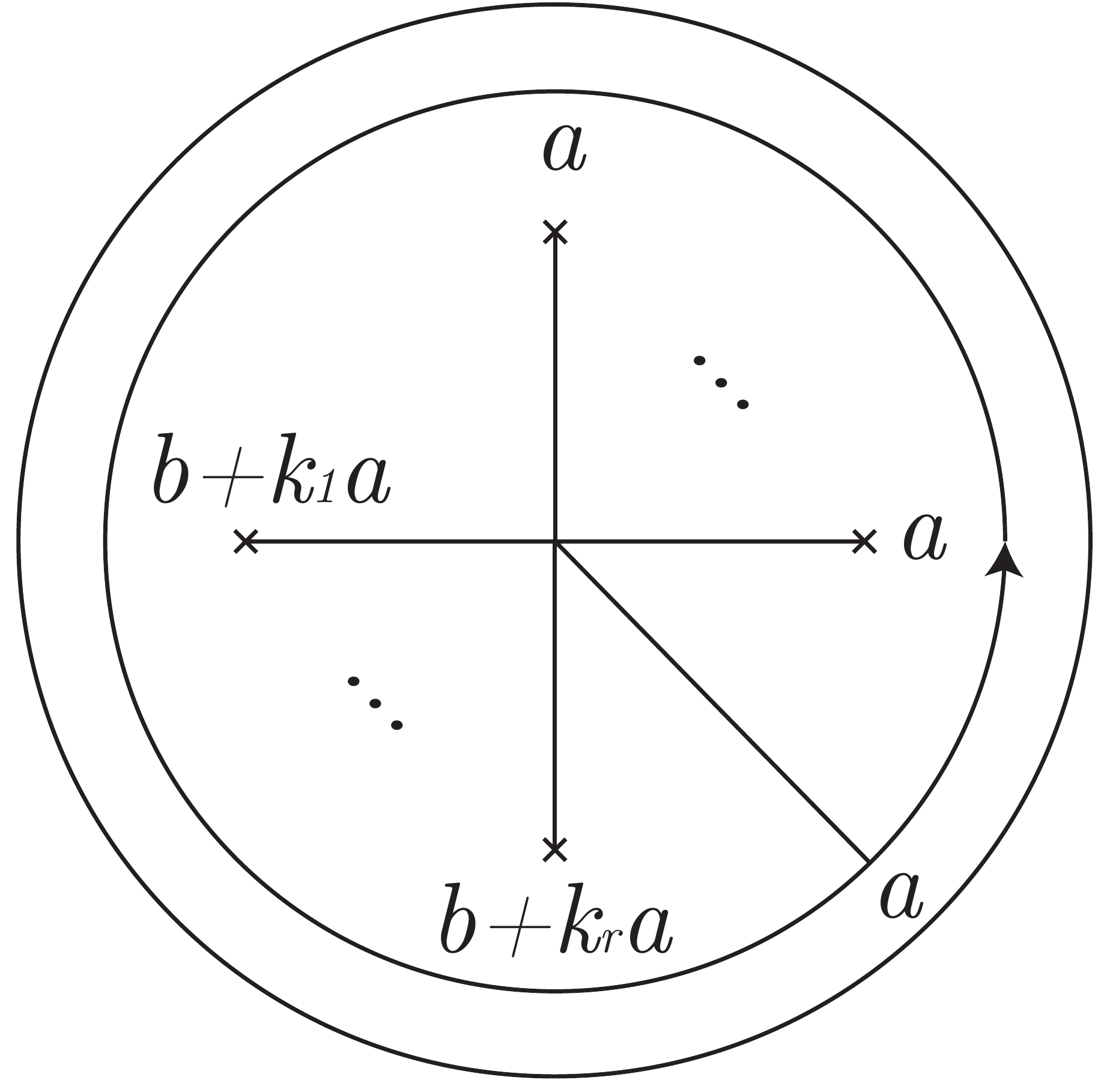} \caption{Factorisation for a boundary Lefschetz fibration from Hayano's theorem.} \label{fig:hayano1}
\end{center}
\end{figure}

As a consequence, any \blf over~$D^2$ admits a Hurwitz system as indicated in \autoref{fig:hayano1}.
Moreover, for relatively minimal fibrations we can say even more.

\begin{corollary}\label{T:factorization minimal case}
Let $f\colon (X^4,\mc D^2)\ra (D^2,\del D^2)$ be a relatively minimal \blf.
Then $f$ has a cycle system of the form
	\begin{equation}\label{eq:minimal normal form}
	\big(a;b+ka,b,b+na,\dots\big),
	\end{equation}
with $k\in\{1,2,3\}$ and $n\in\Z$.
\end{corollary}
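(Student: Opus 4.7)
My plan is to combine Hayano's Factorisation Theorem with the constraints imposed by relative minimality, and then normalise by a global basis change. As a first step, I would apply \autoref{T:Hayano factorization} to $f$ to obtain a cycle system of the form
\[(a; \, a, \dots, a, \, b + k_1 a, \dots, b + k_r a)\]
with at least one consecutive difference $k_i - k_{i+1} \in \{1, 2, 3\}$. Since $f$ is relatively minimal (\autoref{D:relatively minimal}), no Lefschetz vanishing cycle is parallel to $a$, so the initial string of $a$'s must be absent, and the system reduces to $(a; b + k_1 a, \dots, b + k_r a)$, still carrying a pair of consecutive cycles whose indices differ by $1$, $2$, or $3$.

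The main task is then to arrange for this distinguished pair to sit at positions $1$ and $2$, i.e.\ that $k_1 - k_2 \in \{1, 2, 3\}$. I would attempt this using the cyclic Hurwitz shift from \autoref{T:Hurwitz equivalence}, $(a; b_1, \ldots, b_r) \sim (B_1(a); b_2, \ldots, b_r, b_1)$, composed with the global diffeomorphism $B_1^{-1}$ to restore the boundary vanishing cycle to $a$; the net effect is a rotation of the Lefschetz cycles together with a conjugation by $B_1^{-1}$. This generally destroys the Hayano form, so after each shift I would re-apply \autoref{T:Hayano factorization} to return to a Hayano factorisation, then repeat until the distinguished pair has reached the front.

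Finally, once $k_1 - k_2 \in \{1,2,3\}$, the global diffeomorphism $A^{-k_2}$ fixes $a$ and sends every $b + k_j a$ to $b + (k_j - k_2) a$, producing the required cycle system
\[(a; \, b + k a, \, b, \, b + n a, \dots)\]
with $k = k_1 - k_2 \in \{1, 2, 3\}$ and $n = k_3 - k_2 \in \Z$ (the latter present only when $r \geq 3$, consistent with the $(a;b+2a,b)$ factorisation in \autoref{T:two Lefschetz}). The principal obstacle is the middle step: controlling the combined effect of cyclic Hurwitz shifts, compensating diffeomorphisms, and re-applications of Hayano's theorem, and confirming that iterating this procedure places the distinguished pair at positions $1$ and $2$ after finitely many moves, is the main piece of work.
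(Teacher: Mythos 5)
Your overall skeleton coincides with the paper's: apply \autoref{T:Hayano factorization}, use relative minimality to discard the initial string of $a$'s (correct --- by \autoref{D:relatively minimal} no cycle system of $f$ contains a Lefschetz cycle parallel to $a$, so the Hayano form starts directly with $b+k_1a$), move the distinguished pair to the front, and normalise by the global diffeomorphism $A^{-k_{i+1}}$, which fixes $a$ and sends $b+k_ja$ to $b+(k_j-k_{i+1})a$. The first and last steps match the paper exactly. The gap is the middle step, and you have correctly flagged it as the unfinished part. The procedure you propose --- cyclic Hurwitz shift, compensating conjugation by $B_1^{-1}$, then re-applying \autoref{T:Hayano factorization} and iterating --- does not work as described: each re-application of Hayano's theorem produces a \emph{new} factorisation whose distinguished pair sits in an uncontrolled position, so there is no monotone quantity and no reason the iteration should ever place the pair at positions $1$ and $2$. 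Moreover the compensating conjugation $B_1^{-1}=\tau_{b+k_1a}^{-1}$ does not return the boundary cycle to $a$ (one computes $B_1(a)=a-(b+k_1a)$), so even the individual step is not set up correctly.

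The fix is much simpler and is what the paper does: use only the elementary transposition from \autoref{T:Hurwitz equivalence}, namely $(a;\dots,b_j,b_{j+1},\dots)\sim(a;\dots,b_{j+1},B_{j+1}(b_j),\dots)$. This move leaves $a$ untouched and leaves the cycle moving \emph{leftward} unchanged; only the cycle moving rightward gets twisted. Pushing each of $b+k_1a,\dots,b+k_{i-1}a$ rightward past the whole remaining list therefore slides the pair $b+k_ia,\ b+k_{i+1}a$ (and, when $i+2\leq r$, the cycle $b+k_{i+2}a$ after it) into the leading positions without modifying them; then apply $A^{-k_{i+1}}$. No cyclic shifts, global conjugations, or re-factorisations are needed. (One small caveat, which the paper's own proof also elides: if the only admissible index is $i=r-1$, the cycle landing in third position is a twisted copy of $b+k_1a$ and need not a priori have the form $b+na$; your formula $n=k_3-k_2$ silently assumes $i\leq r-2$.)
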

\begin{proof}
From \autoref{T:Hayano factorization} and \autoref{T:blow-down} we can deduce that $f$ has a cycle system of the form 
	$\big(a;b+k_1a,\dots,b+k_ra\big)$ 
with $k_i-k_{i+1}\in\{1,2,3\}$ for some~$1\leq i<r$. 
By Hurwitz moves we can bring the cycle system to the form 
	$\big(a;b+k_i,b+k_{i+1}a,b+k_{i+2},\dots\big)$. 
Furthermore, by applying $A^{-k_{i+1}}$ we get
	$\big(a;b+(k_i-k_{i+1})a,b ,b+(k_{i+2}-k_{i+1})a,\dots\big)$. 
\end{proof}

The next step is to match the pattern in the cycle systems in~\eqref{eq:minimal normal form} with topological operations in the same spirit as \autoref{T:blow-down}.
This step is similar in form to what Hayano does while studying simplified broken Lefschetz fibrations (c.f. \cite[Theorem 4.6]{MR2801419}). We first treat the cases $k=1,3$.

\begin{proposition}\label{T:reduction 1 and 3}
Let $f\colon (X^4,\mc D^2)\ra (D^2,\del D^2)$ be a \blf over the disc with cycle system of the form $\big(a;b+ka,b, b_3,\dots,b_\ell\big)$ with $k\in\{1,3\}$.
\begin{enumerate}
\item
If $k=1$, then $f$ is not relatively minimal, that is, $X=X'\#\CPbar$ where $X'$ carries a \blf whose divisor has the same co-orientability as~$\mc D$;
\item 
If $k=3$, then there is a \blf $f'\colon (X',\mc D')\ra (D^2,\del D^2)$ with one fewer Lefschetz singularity.
We have~$X=X'\#\CP$ and the co-orientability of~$\mc D'$ is opposite to that of~$\mc D$.
A cycle system for~$f'$ is given by~$(a;b-a,b_3,\dots,b_\ell)$.
\end{enumerate}
\end{proposition}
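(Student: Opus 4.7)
\textbf{For part (1)}, I would reduce directly to \autoref{T:blow-down} by a single Hurwitz move. Applying $(a;b_1,b_2,b_3,\dots)\sim (a;b_2,B_2(b_1),b_3,\dots)$ with $b_1=b+a$ and $b_2=b$, and using~\eqref{eq:Picard-Lefschetz} together with $\langle b,a\rangle=-1$, one computes $B(b+a)=(b+a)+\langle b,b+a\rangle b=(b+a)-b=a$. This yields the equivalent cycle system $(a;b,a,b_3,\dots,b_\ell)$, certifying that $f$ is not relatively minimal. Invoking \autoref{T:blow-down} at index $i=2$ then gives $X\cong X'\#\CPbar$ together with a cycle system $(a;A(b),b_3,\dots,b_\ell)=(a;b+a,b_3,\dots,b_\ell)$ for~$f'$ whose divisor $\mc D'$ inherits the co-orientability of~$\mc D$.

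\textbf{For part (2)}, the plan has two steps. The first is a purely combinatorial construction of the new \blf. Using the identification $\MCG(T^2)\cong SL(2,\Z)$ afforded by the dual pair $(a,b)$ as in \autoref{R:the torus rules}, I write $B_{b+3a}=A^3BA^{-3}$ and $B_{b-a}=A^{-1}BA$ and verify by direct matrix computation, relying only on the relation $(AB)^3=-\mathrm{id}$, the key identity
\[
B\cdot B_{b+3a} \;=\; -\,B_{b-a}\cdot A^{-5} \quad \in \quad \MCG(T^2).
\]
Consequently, if the monodromy of $f$ is $\epsilon A^k$, then replacing the pair $(b+3a,b)$ by the single cycle $b-a$ produces a new collection whose monodromy is $-\epsilon A^{k+5}$; this is again of the form $\pm A^{k'}$, so $(a;b-a,b_3,\dots,b_\ell)$ is an abstract cycle system in the sense of \autoref{D:abstract cycle systems}. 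By \autoref{T:compatibility} the sign flip shows that the associated divisor has co-orientability opposite to that of~$\mc D$, and \autoref{T:building bLFs} then produces a \blf $f'\colon (X',\mc D')\to(D^2,\del D^2)$ realizing this cycle system with one fewer Lefschetz singularity.

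The second step is the identification $X\cong X'\#\CP$ via Kirby calculus. Applying \autoref{lem:basickirby} with $k=3$ to the Kirby diagram of~$f$ converts it into one containing an isolated $(k-2)=+1$-framed unknot; blowing this down contributes the $\CP$ summand and leaves a diagram with the $0$-framed $a$-handle, the $-1$-framed $b$-handle, and the untouched handles corresponding to $b_3,\dots,b_\ell$. A single handle slide of the $b$-handle over the $a$-handle then changes its attaching curve to $b-a$ without altering the framing, since $a$ and $b$ have zero linking in the standard Kirby picture of $T^2\times D^2$. The resulting diagram is exactly the standard Kirby diagram of \autoref{R:Kirby} associated to the cycle system $(a;b-a,b_3,\dots,b_\ell)$, and hence represents~$X'$. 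The main technical difficulty is precisely this bookkeeping of framings and orientations through the handle slides, so that the final diagram matches the standard one for $(a;b-a,b_3,\dots,b_\ell)$ and not some other curve of the form $b\pm na$; this is carried out in the same spirit as the proof of \autoref{T:two Lefschetz}.
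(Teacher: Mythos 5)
Your part (1) is correct and essentially identical to the paper's argument: a single Hurwitz move turns one Lefschetz vanishing cycle into a copy of $a$ (the paper uses the move $(a;b_1,b_2,\dots)\sim(a;B_1^{-1}(b_2),b_1,\dots)$ to reach $(a;a,a+b,\dots)$, you use the other one to reach $(a;b,a,\dots)$ -- both are fine), after which \autoref{T:blow-down} applies. Your combinatorial step in part (2) is also sound, and in fact your identity $B\cdot B_{b+3a}=-B_{b-a}\cdot A^{-5}$ is the correct one: the paper's displayed computation asserts $\tau_{b-a}^{-1}(\tau_b\circ\tau_{b+3a})=-A^{-1}$, but a matrix check (or redoing the braid-relation manipulation without dropping the trailing $A^{-3}$) gives $-A^{-5}$. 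Only the sign matters for the co-orientability claim, so this does not affect the proposition, but your bookkeeping of the power of $A$ (hence of the Euler number of $N\mc D'$) is the accurate one.

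The gap is in your Kirby-calculus step. The $(k-2)=+1$-framed unknot produced by \autoref{lem:basickirby} is \emph{not} isolated: it is unlinked from the handles for $b_3,\dots,b_\ell$, but it links the $b$-handle once (it arises by sliding the $b+3a$-handle over the $b$-handle). If it really were isolated, blowing it down would leave the diagram of $(a;b,b_3,\dots,b_\ell)$ -- whose monodromy $B_\ell\cdots B_3 B=\pm A^kB_{b+3a}^{-1}$ is not a signed power of $A$, so this cannot be the diagram of a \blf, and it certainly does not represent the same manifold as $(a;b-a,b_3,\dots,b_\ell)$. What actually happens is that the unknot splits off only after you slide the $b$-handle over it, and this blow-down changes the $b$-handle's framing from $-1$ to $-2$; the subsequent slide over the $0$-framed $a$-handle then serves a double purpose, changing the attaching curve to $b-a$ \emph{and} restoring the fibre framing $-1$ required of a Lefschetz handle (in the standard diagram the relevant linking contribution is nonzero, contrary to your "zero linking" assertion). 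So the framing bookkeeping you defer as "the main technical difficulty" is not merely tedious: as sketched, your intermediate claims (isolated unknot, $b$-handle still $-1$-framed, framing-preserving final slide) are inconsistent with one another and with the correct outcome, and this portion needs to be redone along the lines of the paper's manipulation.
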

\begin{proof}
For $k=1$ we have
$$(a;b+a,b,\dots)\sim (a;\tau_{a+b}^{-1}b, a+b,\dots) = (a;a,a+b,\dots)$$
by a single Hurwitz move, and we can then apply \autoref{T:blow-down}.

For $k=3$ we compare Kirby diagrams as in the proof of \autoref{T:blow-down}.
We can draw a Kirby diagram for this fibration in which we represent only the handles corresponding to $b+3a$ and $b$ and the boundary vanishing cycle and keep in mind that the handles corresponding to the other Lefschetz cycles are on top of the ones we represent in this diagram. 
Using \autoref{lem:basickirby} we obtain the diagram in \autoref{fig:k=3a}.(a). Sliding the `$b$-handle' over the 1-framed unknot, that unknot becomes unlinked from the rest of the diagram, thereby splitting off a copy of~$\CP$.
Moreover, we can manipulate the remaining diagram into the shape of a Kirby diagram of a \blf by first creating an overcrossing for the~$-2$-framed~$2$-handle, so that its blackboard framing becomes $-1$ (see \autoref{fig:k=3a}.(c)) and then subtracting the $0$-framed $2$-handle representing $a$ from the $-2$-framed $2$-handle representing $b$ to obtain \autoref{fig:k=3a}.(d). 
The final effect on the fibration is the replacement of the singularities with vanishing cycles $b+3a=A^3(b)$ and $b$ by one with vanishing cycle $b-a=A\inv(b)$. 
In order to understand the effect on the divisor we compare the monodromies:
	\begin{align*}
	\tau_{b-a}^{-1}(\tau_b\circ \tau_{b+3a})&=A^{-1}B^{-1}ABA^3BA^{-3} = A^{-1}B^{-1}(ABA) A^2BA\\
	&=A^{-1}B^{-1}(BAB) A^2BA=A^{-1}(AB A) ABA\\
	&=A^{-1}(BA)^3= -A^{-1}.\\
\end{align*}
It follows that the co-orientability is reversed by the replacement.
\begin{figure}[h!!]
\begin{center}
\includegraphics[height=8cm]{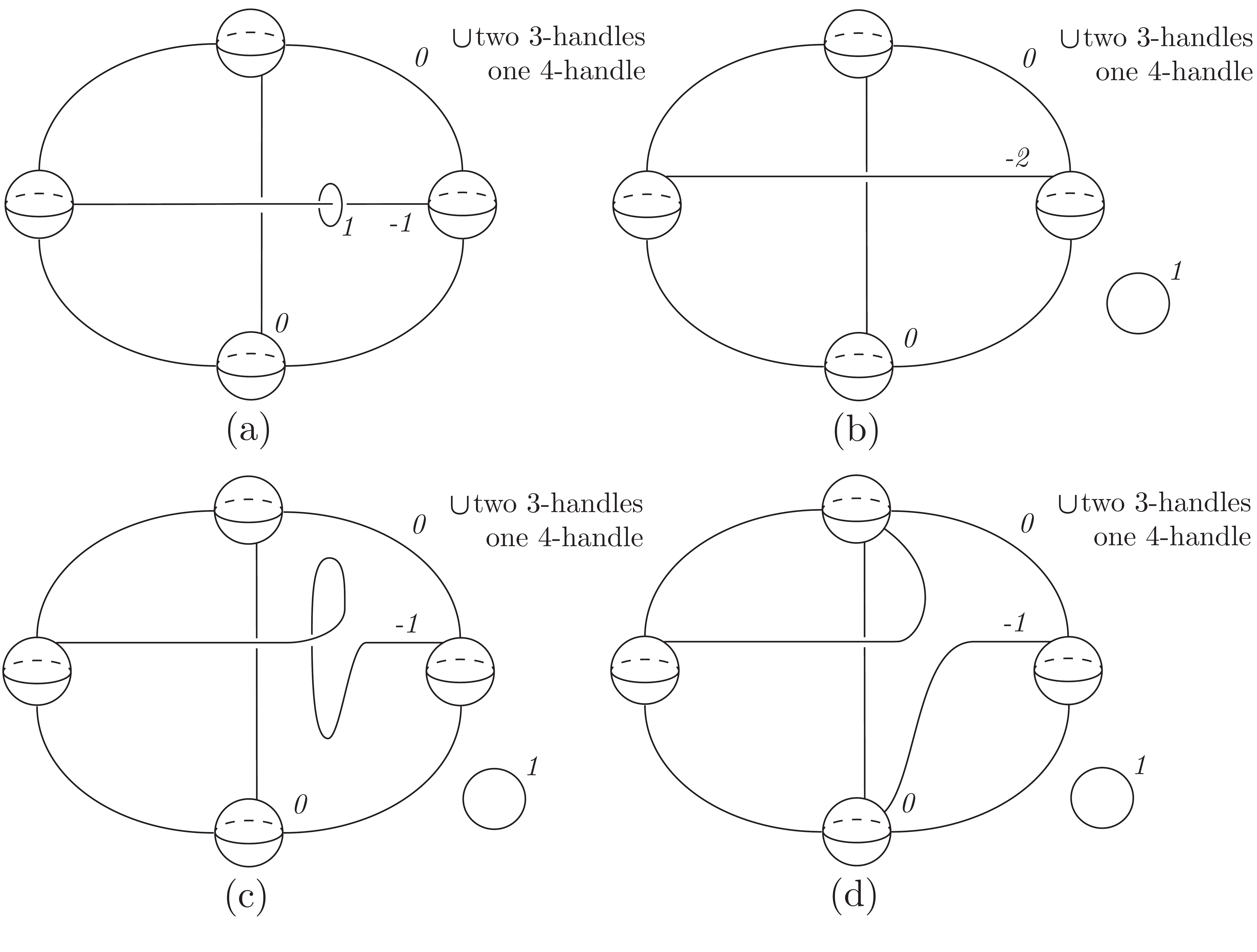} \caption{Case $k_1-k_2=3$. Manipulation of the Kirby diagram to split off a copy of $\C P^2$.} \label{fig:k=3a}
\end{center}
\end{figure}
\end{proof}

The case~$k=2$ in~\eqref{eq:minimal normal form} is a bit more complicated since it explicitly involves the third Lefschetz vanishing cycle.

\begin{proposition}\label{T:reduction 2}
Let $f\colon (X^4,\mc D^2)\ra (D^2,\del D^2)$ be a \blf over the disc with cycle system of the form 
	$\big(a;b+2a,b,b+na, b_4,\dots,b_\ell\big)$ with $n\in\Z$.
\begin{enumerate}
\item
If $n$ is even, then there is a \blf $f'\colon (X',\mc D')\ra (D^2,\del D^2)$ with two fewer Lefschetz singularities.
We have~$X=X'\#S^2\times S^2$ and the co-orientability of~$\mc D'$ is opposite to that of~$\mc D$.
A cycle system for~$f'$ is given by~$(a;b+na,b_4,\dots,b_\ell)$;
\item 
If $n$ is odd, the cycle system is equivalent to one of those covered by \autoref{T:reduction 1 and 3}.
\end{enumerate}
\end{proposition}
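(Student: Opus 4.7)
The proof splits along the parity of~$n$ and uses the algebraic identity $\tau_b\circ\tau_{b+2a}=-A^{-4}$ in $\MCG(T^2)$ established in the proof of \autoref{T:two Lefschetz}.

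For Part~(1), where $n$ is even, I would mimic the Kirby-calculus strategy of \autoref{T:reduction 1 and 3}. On the algebraic side, replacing the subword $(b+2a,b,b+na)$ with $(b+na)$ in the monodromy factorisation multiplies the product by $-A^{-4}$, so a compatibility relation $B_\ell\circ\cdots\circ B_1=\pm A^k$ becomes $\mp A^{k+4}$ for the new system; this matches exactly the expected flip of co-orientability of~$\mc D'$ and the shift of the Euler number of its normal bundle by~$+4$. For the topological claim $X\cong X'\#(S^2\times S^2)$, I would start by applying \autoref{lem:basickirby} with $k=2$ to the leading triple $(a;b+2a,b,\ldots)$, which turns the handle attached along~$b+2a$ into an isolated $0$-framed unknot~$U_1$. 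Writing $n=2m$, one then performs analogous handle slides on the $b+na$ handle: slide it $n$ times over the $a$-handle and once over the $b$-handle to obtain a second unknot~$U_2$. The parity of~$n$ forces the resulting framing on~$U_2$ to equal~$0$, and a careful tabulation of linking numbers shows that $U_1$ and~$U_2$ link with linking number~$\pm 1$. A Hopf link of two $0$-framed $2$-handles is the standard Kirby diagram for an $S^2\times S^2$-summand, and the residual diagram is exactly the one associated to the cycle system $(a;b+na,b_4,\ldots,b_\ell)$.

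For Part~(2), where $n$ is odd, I would argue purely combinatorially. The key tool is the conjugation relation
	\begin{equation*}
	(\tau_b\circ\tau_{b+2a})\,\tau_{b+na}\,(\tau_b\circ\tau_{b+2a})^{-1}=\tau_{b+(n-4)a},
	\end{equation*}
which follows from $\tau_b\tau_{b+2a}=-A^{-4}$ and the Picard--Lefschetz formula. Combined with the rotation and swap moves of \autoref{T:Hurwitz equivalence}, this lets one move the block $(b+2a,b)$ past~$b+na$ at the cost of shifting $n\mapsto n-4$, while conjugating the remaining cycles $b_4,\ldots,b_\ell$. Iterating, I would reduce to the base cases $n\in\{-3,-1,1,3\}$. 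For each of these a short, direct sequence of Hurwitz swaps brings the system into the form required by \autoref{T:reduction 1 and 3}: for instance, when $n=1$ the swap $(b+2a,b,b+a)\sim(b+2a,b+a,-a)$ exposes a Lefschetz vanishing cycle parallel to~$a$, placing one in the $k=1$ case, and an analogous manoeuvre combined with a change of dual pair handles $n=3$ via the $k=3$ case.

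The main obstacle I anticipate is the Kirby-calculus verification in Part~(1): one must verify that $U_1$ and~$U_2$ are genuinely Hopf-linked rather than unlinked, since an unlinked pair would instead produce a $S^2\tilde\times S^2\cong\CP\#\CPbar$ summand. The parity of~$n$ is precisely the hypothesis that distinguishes the two outcomes, and tracking this through the successive handle slides is where the bulk of the technical work lies.
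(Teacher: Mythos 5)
Your Part~(2) is essentially the paper's own argument: the conjugation relation you write down is exactly the content of the paper's \autoref{lem:k3} (up to replacing $n-4$ by $n+4$, depending on which side you conjugate by $\tau_b\tau_{b+2a}=-A^{4}$), and reducing $n$ modulo $4$ and then exposing a pair of the form $(c+a,c)$ or $(c+3a,c)$ by Hurwitz moves is precisely what the paper does. (The paper takes representatives $n\in\{-3,-2,-1,0\}$, so the odd cases $(b,b-a)$ and $(b,b-3a)$ are already in the required form and the extra manoeuvres you need for $n=1,3$ are avoided.)

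Part~(1), however, has a genuine gap --- in fact two. First, you split off the wrong pair of handles. If the $S^2\times S^2$ summand is assembled from $U_1$ (coming from $b+2a$) and $U_2$ (coming from $b+na$), then the residual diagram is the one associated to $(a;b,b_4,\dots,b_\ell)$, not to $(a;b+na,b_4,\dots,b_\ell)$ as claimed; worse, there is no reason for $(a;b,b_4,\dots,b_\ell)$ to satisfy the compatibility condition \eqref{eq:compatibility condition}, so the residual diagram need not describe a \blf at all. The pair that must be removed is $(b+2a,b)$: its monodromy contribution $\tau_b\circ\tau_{b+2a}=-A^{4}$ is a signed power of $A$ and can therefore be absorbed into the boundary twist, which is also what produces the flip of co-orientability. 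Second, the Kirby calculus for general even $n$ does not go through as described. After \autoref{lem:basickirby} the unknot $U_1$ is linked only with the $b$-handle, so to obtain a second $0$-framed unknot Hopf-linked with $U_1$ one must turn the \emph{$b$-handle} into an unknot, and this requires a second handle parallel to $b$; that is, one must first use your conjugation relation to reduce to $n=0$ (after checking $(a;b+2a,b,b,\dots)\sim(a;b+2a,b,b-2a,\dots)$), exactly as in your Part~(2). Your assertion that ``the parity of $n$ forces the framing of $U_2$ to equal $0$'' is unjustified and is not where the parity actually enters: the parity of $n$ decides whether the mod-$4$ reduction lands in $\{-2,0\}$ (the $S^2\times S^2$ case) or in $\{-3,-1\}$ (the cases of \autoref{T:reduction 1 and 3}, which remove only \emph{one} singularity and split off a single $\CP$ or $\CPbar$, not a $\CP\#\CPbar$). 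The paper performs the mod-$4$ reduction first for both parities and only then draws the Kirby diagram, and only for the single case $n=0$.
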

\begin{proof}
Before we start drawing Kirby diagrams, we show that we can gain some more control over~$n$, namely, we can can change it by arbitrary multiples of~$4$.
This step is not strictly necessary for our aims, but may be of independent interest as it leads towards a classification of Lefschetz fibrations over the disc which have signed powers of Dehn twists as monodromy.

\begin{lemma}\label{lem:k3}
The following holds:
$$\big(a;b+2a,b,b+na,b_4,\dots,b_\ell\big) \sim \big(a;b+2a,b,b+(n+4)a,A^{4}(b_4),\dots,A^{4}(b_\ell)\big).$$
\end{lemma}
\begin{proof}
As we saw in \autoref{T:two Lefschetz}, the monodromy around the pair of singularities with vanishing cycles~$b+2a$ and~$b$ is~$-A^{4}$, therefore, using Hurwitz moves and the fact that the vanishing cycles do not have a prefered orientation we have 
\begin{align*}
\big(a;b+2a,b,b+na,b_4,\dots,b_\ell\big) &\sim \big(a;-A^{4}(b+na),-A^{4}(b_4),\dots,-A^{4}(b_\ell),b+2a,b\big)\\
&\sim \big(a;A^{4}(b+na),A^{4}(b_4),\dots,A^{4}(b_\ell),b+2a,b\big)\\
&= \big(a;b+(n+4)a,A^{4}(b_4),\dots,A^{4}(b_\ell),b+2a,b\big)\\
&\sim \big(a;b+2a,b,b+(n+4)a,A^{4}(b_4),\dots,A^{4}(b_\ell)\big). \qedhere
\end{align*}
\end{proof}

With this lemma at hand, we can arrange that in Hayano's factorisation as in \autoref{T:factorization minimal case} the cycle system is $(a;b+2a,b,b+na,\dots)$, where $n=-3,-2,-1$ or $0$. It is worth looking at the four possibilities it yields. 
If $n = -1$, we note that
	\begin{equation*}
	(a;b+2a,b,b-a,\dots) \sim (a;b,b-a,\dots) \sim (a;b+a,b,\dots),
	\end{equation*}
which lands us back in case $(1)$ of~\autoref{T:reduction 1 and 3}. 
Similarly, if $n = -3$, then we have
	\begin{equation*}
	(a;b+2a,b,b-3a,\dots) \sim (a;b,b-3a,\dots) \sim (a;b+3a,b,\dots),
	\end{equation*}
which lands us in case $(2)$ of~\autoref{T:reduction 1 and 3}. 
What remains are the cases in which $n = 0$ or~$-2$. 
We argue that these cases are, in fact, Hurwitz equivalent.
A quick computation shows that $\tau_{b+2a}\inv(b)=-b-4a$ which is just~$b+4a$ with the opposite orientation.
Hence we have
	\begin{align*}
	(a;b+2a,b,b,\dots) &\sim (a;\tau_{b+2a}\inv b,b+2a,b,\dots)\\
				   &=(a;b+4a,b+2a,b,\dots)\\
				   &\sim (a;b+2a,b,b-2a,\dots).
	\end{align*}

Now we can deal with the case $n = 0$ by drawing the Kirby diagram for the fibration. 
In what follows we will work only with the handles corresponding to the boundary vanishing cycle and the first three Lefschetz singularities, so we will omit the remaining $2$-handles with the understanding that they remain unchanged and lay on top of the handles where the interesting part takes place. Using \autoref{lem:basickirby}, this simplified Kirby diagram is drawn in \autoref{fig:k=2b}.(a). Sliding one $2$-handle representing $b$ over the other we obtain the diagram in \autoref{fig:k=2b}.(b) and we can slide the $2$-handle representing $b$ over the $0$-framed $2$-handle to split off a copy of $S^2 \times S^2$ from the diagram. Finally we observe that after removal of the $S^2 \times S^2$-factor, the remaining part is the Kirby diagram for the fibration with the singular fibres corresponding to $b+2a$ and $b$ removed. Since the monodromy around these is $-a^4$, the sign of the monodromy map for this new fibration is opposite to that of the original one.
\begin{figure}[h!!]
\begin{center}
\includegraphics[height=4.7cm]{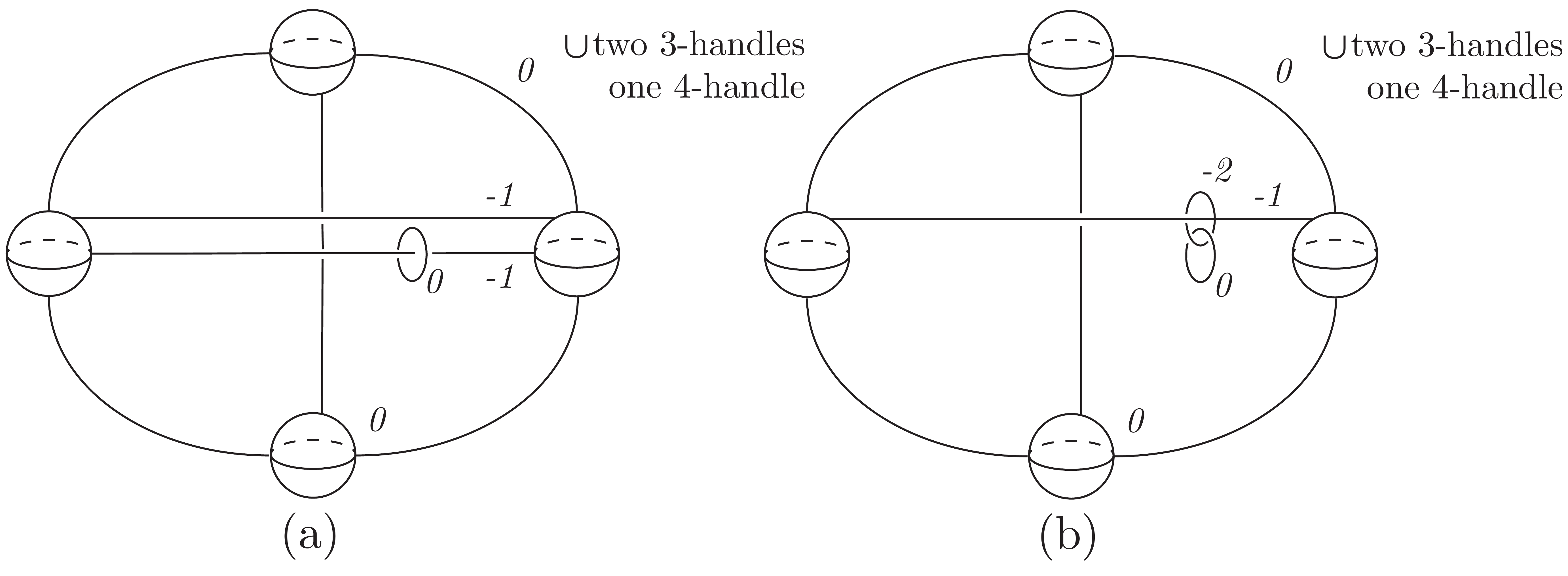} \caption{Case $k_2=-2$. Lefschetz fibration and corresponding Kirby diagram.} \label{fig:k=2b}
\end{center}
\end{figure}
\end{proof}

We now have the necessary tools to prove our main theorem:

\begin{theorem}\label{theo:blf over D2}
Let $f\colon (X^4,\mc D^2)\ra (D^2,\del D^2)$ be a relatively minimal boundary Lefschetz fibration.
Then $X$ is diffeomorphic to one of the following manifolds:
\begin{enumerate}
\item $S^1 \times S^3$;
\item $\# m S^2 \times S^2$, including $S^4$ for $m=0$;
\item $\# m \C P^2 \# n \overline{\C P^2}$ with $m> n \geq 0$.
\end{enumerate}
In all cases the generic fibre is nontrivial in $H_2(X\setminus \mathcal{D};\R)$. In case \emph{(1)}, $\mathcal{D}$ is co-orientable, while in cases \emph{(2)} and \emph{(3)}, $\mathcal{D}$ is co-orientable if and only if $m$ is odd.
\end{theorem}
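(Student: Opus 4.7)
The natural plan is induction on the number $\ell$ of Lefschetz singular fibres, with base cases covered by earlier results: \autoref{T:no Lefschetz} gives $X\cong S^1\times S^3$ with co-orientable $\mc D$ (case (1)) for $\ell=0$; \autoref{T:one Lefschetz} makes $\ell=1$ vacuous since relative minimality fails there; and \autoref{T:two Lefschetz} gives $X\cong S^4$ with non-co-orientable $\mc D$ (case (2) with $m=0$) for $\ell=2$. Nontriviality of $[f^{-1}(p)]$ in $H_2(X\setminus\mc D;\R)$ is immediate in each base case from the explicit fibrations described there.

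For the inductive step $\ell\geq 3$, the idea is to apply \autoref{T:factorization minimal case} to bring the cycle system into the form $(a;\,b+ka,\,b,\,b+na,\,b_4,\dots,b_\ell)$ with $k\in\{1,2,3\}$ and then split into cases. The case $k=1$ contradicts relative minimality by \autoref{T:reduction 1 and 3}(1). The case $k=3$ invokes \autoref{T:reduction 1 and 3}(2) to produce $f'$ with $\ell-1$ singularities, cycle system $(a;b-a,b_3,\dots,b_\ell)$, and opposite co-orientability, satisfying $X\cong X'\#\CP$; since $b-a$ is essential and not parallel to $a$ and the remaining $b_i$ inherit relative minimality from $f$, the fibration $f'$ is again relatively minimal. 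The case $k=2$ with $n$ odd reduces to one of the previous cases by \autoref{T:reduction 2}(2), while $k=2$ with $n$ even is handled by \autoref{T:reduction 2}(1) to produce $X\cong X'\#(S^2\times S^2)$ with $\ell-2$ singularities, flipped co-orientability, and again a relatively minimal $f'$. The inductive hypothesis applied to $f'$ then places $X'$ in the list and, via the classical diffeomorphism $(S^2\times S^2)\#\CP\cong 2\CP\#\CPbar$, rewrites $X$ in one of the canonical forms in (2) or (3); a parity count on the number of connected-sum factors, each of which flips co-orientability, confirms that $\mc D$ is co-orientable iff $m$ is odd.

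The crucial point for closing the induction is that the reduction always lands at $\ell'\geq 2$, so every chain terminates at $S^4$ and never backtracks to $S^1\times S^3$. Indeed, $\ell'=1$ would force the unique vanishing cycle of $f'$ to be parallel to $a$ by the monodromy constraint in the proof of \autoref{T:one Lefschetz}, contradicting the nontrivial $b$-component of $b-a$ or $b+na$; alternatively one can use $\chi(X)=\ell$ together with the fact that connected sum with $\CP$ or $S^2\times S^2$ increases $\chi$ by $1$ or $2$. Nontriviality of the generic fibre is preserved at every reduction because the handle slides in \autoref{T:reduction 1 and 3} and \autoref{T:reduction 2} do not change the class of $f^{-1}(p)$ in $X\setminus\mc D$. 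The hard part will be the bookkeeping in the $k=2$ case, where the parity of $n$ dictates whether one splits off $S^2\times S^2$ or falls back to a $k\in\{1,3\}$ reduction; verifying that every occurrence of $k=2$ with $n$ odd is genuinely reducible without disturbing relative minimality, and that the induced monodromy constraint at $\ell-2$ is satisfied, is the subtlest part of the argument.
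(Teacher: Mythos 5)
Your overall architecture coincides with the paper's: induction on the number $\ell$ of Lefschetz singularities, base cases from \autoref{T:no Lefschetz}, \autoref{T:one Lefschetz} and \autoref{T:two Lefschetz}, normalisation of the cycle system via \autoref{T:factorization minimal case}, reduction via \autoref{T:reduction 1 and 3} and \autoref{T:reduction 2}, and the same parity count for co-orientability. However, one of the steps you add to close the induction is not justified as written. You claim $f'$ is again relatively minimal ``since $b-a$ is essential and not parallel to $a$ and the remaining $b_i$ inherit relative minimality from $f$.'' Relative minimality (\autoref{D:relatively minimal}) quantifies over \emph{all} cycle systems in the Hurwitz class, not over a single representative: for example $(a;b+a,b)$ has no entry that is null-homotopic or parallel to $a$, yet it is equivalent to $(a;a,a+b)$ and hence not relatively minimal. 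So exhibiting one good-looking cycle system for $f'$ proves nothing. The paper does not assert minimality of the intermediate fibrations; it allows a $\CPbar$ to split off via \autoref{T:blow-down} whenever an intermediate stage fails to be minimal, which is why $\CPbar$ appears in its list of summands. If you want your cleaner bookkeeping (no $\CPbar$ ever split off, so that $m>n$ and the absence of $(S^1\times S^3)\#r\CPbar$ come for free), you must actually prove that the reduced Hurwitz classes stay relatively minimal.

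A second, smaller point: the nontriviality of the generic fibre should not be threaded through the induction --- your statement that the handle slides ``do not change the class of $f^{-1}(p)$'' compares classes living in different manifolds. The paper instead invokes the general fact (\cite[Theorem 8.1]{2017arXiv170303798C}) that the fibre of any boundary Lefschetz fibration over the disc is nontrivial in $H_2(X\setminus\mc D;\R)$, since $X\setminus\mc D$ is obtained from $D^2\times T^2$ by attaching Lefschetz $2$-handles, which do not kill the fibre class. On the positive side, your observation that the reduction never lands on $\ell'=1$ (ruled out by the monodromy constraint, since a single Dehn twist about a curve with nontrivial $b$-component is never $\pm A^k$) is correct and makes explicit a point the paper leaves implicit.
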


\begin{proof}
Firstly, recall from \cite[Theorem 8.1]{2017arXiv170303798C} that the fibres of every \blf\ over the disc are homologically nontrivial on $X \backslash \mathcal{D}$ because $X \backslash \mathcal{D}$ is obtained from the trivial fibration by adding Lefschetz singularities. Topologically, each of these added singularities corresponds to the addition of a $2$-cell to $D^2 \times T^2$ which does not kill homology in degree $2$.

The theorem is true for fibrations with at most two Lefschetz singularities by 
	\autoref{T:no Lefschetz}, \autoref{T:one Lefschetz}, and \autoref{T:two Lefschetz}.
Finally, whenever there are three or more Lefschetz singularities, Hayano's factorisation theorem in the form of~\autoref{T:factorization minimal case} shows that we can apply either
	\autoref{T:reduction 1 and 3} or \autoref{T:reduction 2}
to pass to a \blf with fewer Lefschetz points, whilst spliting of a copy of either $\CPbar$, $\CP$, or~$S^2\times S^2$.
As for the effect on the divisor, observe that each time we split off or add a connected summand that contributes to $b_2^+$, there is a change in co-orientability. The base case, $S^4$, has a negative sign (see \autoref{T:two Lefschetz}), hence, if $f\colon (X^4,\mc{D}^2) \to (D^2,\del D^2)$ is a boundary Lefschetz fibration with co-orientable $\mathcal{D}$, the number $b_2^+(X)$ must be odd and vice versa.
\end{proof}

As a final step, we observe that all the replacements used in the reduction process can be reversed.
This allows us to produce \blf on all the manifolds listed in \autoref{theo:blf over D2}.

\begin{corollary}\label{T:reduction reversed}
Let $(a;b_1,\dots,b_\ell)$ be a cycle system.
\begin{enumerate}
\item
Passing to $(a;a,b_1,\dots,b_\ell)$ realizes a connected sum with~$\CPbar$.
The co-orientability of the divisor is preserved;
\item
If $\scp{a,b_1}=1$, then passing to $(a;b_1+4a,b_1+a,\dots,b_\ell)$ realizes a connected sum with~$\CP$.
The co-orientability of the divisor is reversed;
\item
If $\scp{a,b_1}=1$, then passing to $(a;b_1,b_1-2a,b_1\dots,b_\ell)$ realizes a connected sum with~$S^2\times S^2$.
The co-orientability of the divisor is reversed.
\end{enumerate}
\end{corollary}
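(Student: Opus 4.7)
The strategy is to recognise each of the three operations as the inverse, at the level of cycle systems, of one of the reduction steps already established in \autoref{T:blow-down}, \autoref{T:reduction 1 and 3}(2), and \autoref{T:reduction 2}. For each item the routine is the same: first confirm that the enlarged sequence still satisfies the compatibility condition~\eqref{eq:compatibility condition} and track the sign of the resulting power of $A$, which by \autoref{T:compatibility} determines co-orientability; then apply the corresponding reduction proposition to the enlarged cycle system and verify that its output agrees (up to Hurwitz equivalence) with the original system; finally read off the connected summand from the proposition.

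For (1) this is essentially immediate: the new monodromy is $B_\ell\circ\cdots\circ B_1\circ A=\pm A^{k+1}$ with unchanged sign, so $\mc D$ remains co-orientable if and only if it was originally, and \autoref{T:blow-down} applied to $(a;a,b_1,\dots,b_\ell)$ (with the $a$-cycle in position $i=1$) strips off the leading $a$ to recover $(a;b_1,\dots,b_\ell)$ while producing the $\CPbar$-summand.

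For (2), set $b=b_1+a$, so that $b_1=b-a$ and $b_1+4a=b+3a$; the identity $\tau_b\tau_{b+3a}=-\tau_{b-a}A\inv$ computed inside the proof of \autoref{T:reduction 1 and 3}(2) then reads $\tau_{b_1+a}\tau_{b_1+4a}=-B_1A\inv$, giving new monodromy $-(\pm A^k)A\inv=\mp A^{k-1}$. Thus the compatibility condition holds with the opposite sign, so co-orientability reverses, and that proposition applied to the new cycle system returns $(a;b_1,\dots,b_\ell)$ together with the $\CP$ summand. For (3) a single preparatory Hurwitz move is needed: formula~\eqref{eq:Picard-Lefschetz} with $\scp{a,b_1}=1$ gives $\tau_{b_1}\inv(b_1-2a)=-(b_1+2a)$, which is $b_1+2a$ as an unoriented curve, so $(a;b_1,b_1-2a,b_1,b_2,\dots,b_\ell)\sim (a;b_1+2a,b_1,b_1,b_2,\dots,b_\ell)$. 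The right-hand side fits \autoref{T:reduction 2} with $b=b_1$ and $n=0$, and the identity $\tau_{b_1}\tau_{b_1+2a}=-A^{-4}$ from \autoref{T:two Lefschetz} shows that the new monodromy is $\mp A^{k-4}$, so the sign (and co-orientability) flips, and the reduction of \autoref{T:reduction 2} returns $(a;b_1,b_2,\dots,b_\ell)$ with an $S^2\times S^2$ summand.

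The main obstacle, to the extent that there is one, is bookkeeping: tracking the signs in the monodromy products across the two or three identities used, and, especially in~(3), handling the identification of unoriented curves with their oriented representatives up to sign. Everything else is an unpacking of identities already worked out in \autoref{T:two Lefschetz}, \autoref{T:reduction 1 and 3}, and \autoref{T:reduction 2}, so the three reversibility statements follow by directly invoking those reduction propositions.
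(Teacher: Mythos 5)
Your proposal is correct and takes essentially the same route as the paper, which simply observes that each operation inverts one of the reductions in \autoref{T:blow-down}, \autoref{T:reduction 1 and 3}, and \autoref{T:reduction 2}; you have merely made explicit the monodromy/sign bookkeeping and the preparatory Hurwitz move in case (3), all of which check out.
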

\begin{proof}
This follows readily from \autoref{T:blow-down}, \autoref{T:reduction 1 and 3}, and \autoref{T:reduction 2}.
\end{proof}

\bibliographystyle{hyperamsplain-nodash}
\bibliography{references}
\end{document}